\documentclass[12pt]{article}
\usepackage{amsmath,amssymb,amsthm}
\usepackage{hyperref}
\setlength{\textwidth}{6.3in}
\setlength{\textheight}{8.7in}
\setlength{\topmargin}{0pt}
\setlength{\headsep}{0pt}
\setlength{\headheight}{0pt}
\setlength{\oddsidemargin}{0pt}
\setlength{\evensidemargin}{0pt}

\newcommand{\tmmathbf}[1]{\ensuremath{\boldsymbol{#1}}}

\newtheorem{theorem}{Theorem}[section]
\newtheorem{corollary}[theorem]{Corollary}
\newtheorem{lemma}[theorem]{Lemma}

\numberwithin{equation}{section}

\usepackage{amssymb,latexsym,amsmath,epsfig,amssymb,amsthm, amsmath,color}

\def\F{\mathcal{F}}

\def\P{\mathcal{P}}
\def\L{\mathcal{L}}
\def\C{\mathcal{C}}

\def\R{\mathcal{R}}
\def\E{\mathcal{E}}

\def\A{\mathcal{A}}
\def\C{\mathcal{C}}

\begin{document}

\title{Some combinatorial number theory problems over finite valuation rings}
\author{
    Pham Van Thang\thanks{EPFL, Lausanne, Switzerland. This research was partially supported by Swiss National Science Foundation grants 200020-162884 and 200020-144531
    Email: {\tt thang.pham@epfl.ch}}
  \and
    Le Anh Vinh\thanks{University of Education, Vietnam National University Hanoi. Email: {\tt vinhla@vnu.edu.vn
}}}
\date{}
\maketitle
\begin{abstract}
Let $\R$ be a finite valuation ring of order $q^r$. In this paper we generalize and improve several well-known results, which were studied over finite fields $\mathbb{F}_q$ and finite cyclic rings $\mathbb{Z}/p^r\mathbb{Z}$, in the setting of finite valuation rings.
\end{abstract}
\section{Introduction}
\subsection{Dot-product congruence classes of simplices}
Let $\mathbb{F}_q$ be a finite field of order $q$ with $q=p^n$ for some prime $p$ and positive integer $r$.  We say that two $k$-simplices  in $\mathbb{F}_q^d$ with vertices $(\mathbf{x}_1, \ldots, \mathbf{x}_{k+1})$, $(\mathbf{y}_1, \ldots, \mathbf{y}_{k+1})$ are in a congruence class if the following condition satisfies
\begin{equation}\label{helloworld}||\mathbf{x}_i-\mathbf{x}_j||=||\mathbf{y}_i-\mathbf{y}_j||, \quad ~ 1\le i, j\le k+1.\end{equation}

Hart and Iosevich \cite{hart1} made the first investigation on counting the number of congruence classes of simplices determined by a point set in $\mathbb{F}_q^d$. More precisely, they proved that if $|\E|\gg q^{\frac{kd}{k+1}+\frac{k}{2}}$ with $d\ge \binom{k+1}{2}$, then $\E$ contains a copy of all $k$-simplices with non-zero edges. Several progress on improving this exponent have been made in recent years, for instance, Chapman et al. \cite{chapman} indicated that one can get a positive proportion of all $k$-simplices in $\mathbb{F}_q^d$ under the condition $|\E|\gg q^{\frac{d+k}{2}}$, and Bennett et al. \cite{groupaction} improved this condition to $q^{d-\frac{d-1}{k+1}}$.  Here and throughout,  $X \ll Y$ means that there exists $C>0$ such that $X\le  CY$, and $X=o(Y)$ means that $X/Y\to 0$ as $q\to \infty$, where $X, Y$ are viewed as functions in $q$.

A variant of this problem was studied by the second listed author \cite{vinh-spectral} with the condition (\ref{helloworld}) is replaced by  
\begin{equation}\label{helloworld2}\mathbf{x}_i\cdot \mathbf{x}_j=\mathbf{y}_i\cdot\mathbf{y}_j, \quad ~ 1\le i, j\le k+1.\end{equation}

In this case, we say that two $k$-simplices $(\mathbf{x}_1, \ldots, \mathbf{x}_{k+1})$ and $(\mathbf{y}_1, \ldots, \mathbf{y}_{k+1})$ are in a dot-product congruence class.

The author of \cite{vinh-spectral} proved that if $|\E|\gg q^{\frac{d+k}{2}}$, then the number of dot-product congruence classes of $k$-simplices in $\E$ is at least $(1-o(1))q^{\binom{k+1}{2}}$. This is also an extension of \cite[Theorem 1.4]{hart1}, and is the best known result sofar.  We remark here that the condition (\ref{helloworld}) is equivalent to the fact that there exist $\theta\in O(d, \mathbb{F}_q)$ (orthogonal group in $\mathbb{F}_q^d$) and $\mathbf{z}\in \mathbb{F}_q^d$ so that $\mathbf{z}+\theta(\mathbf{x}_i)=\mathbf{y}_i$ for $i=1, 2, \ldots, k+1$. From this fact, the authors of \cite{groupaction} used ingenious arguments by combining elementary results from group action theory and Fourier analytic methods to get the threshold $q^{d-\frac{d-1}{k+1}}$. However, this approach does not work for the case of dot-product congruence classes of simplices, since we can not guarantee that there exist $\theta\in O(d, \mathbb{F}_q)$ and $\mathbf{z}\in \mathbb{F}_q^d$ so that $\mathbf{z}+\theta(\mathbf{x}_i)=\mathbf{y}_i$ for $i=1, 2, \ldots, k+1$ when two simplices are in a  dot-product congruence class. 

For the case $k=1$ and $d=2$, it has been shown that if $|\E|\gg q^{4/3}$, then the number of congruence classes of $1$-simplices in $\E$ (distinct distances) is at least $\gg q$. However, for the dot-product case, the best known exponent on the cardinality of $\E$ to get $\gg q$ dot-product congruence classes of $1$-simplices in $\E$ (distinct dot product values) is $q^{3/2}$. If we assume that any line passing through the origin contains at most $|\E|^{1/2}$ points, then the exponent $q^{4/3}$ also holds for the dot-product problem, see \cite[Theorem 2.2, Chapter 2]{iosevich-book} for more details. For general cases, improving the threshold $q^{3/2}$ to $q^{4/3}$ is still an open question.

Let $\R$ be a finite valuation ring of order $q^r$, where $q=p^n$ is an odd prime power. Throughout, $\R$ is assumed to be commutative, and to have an identity. Let us denote the set of units, non-units in $\R$ by $\R^{*}, \R^{0}$ respectively. The detailed definition of finite valuation rings can be found in \cite{bogan, htv}. Note that finite fields and finite cyclic rings are special cases of finite valuation rings.

The initial result on the dot product problem in setting of finite valuation rings was given by Nica in \cite{bogan}. The precise statement is as follows.
\begin{theorem}[\textbf{Nica}, \cite{bogan}]\label{nc}
Let $\E, \F$ be two sets in $\R^d$. For any $\lambda\in \R^*$, let $N_{\lambda}(\E, \F)$ be the number of pairs $(\mathbf{a}, \mathbf{b})\in \E\times \F$ satisfying $\mathbf{a}\cdot \mathbf{b}=\lambda$. Then we have the following estimate
\[\left\vert N_{\lambda}(\E, \F)-\frac{|\E||\F|}{q^r}\right\vert\le q^{(d-1)(r-1/2)}\sqrt{|\E||\F|}.\]
\end{theorem}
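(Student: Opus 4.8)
The plan is to bypass Fourier analysis and reduce the estimate, by a single application of Cauchy--Schwarz, to a second-moment computation whose only nontrivial input is a count of common solutions of two linear equations over $\R$. Throughout, call a vector of $\R^d$ \emph{primitive} if at least one of its coordinates is a unit; since any $\mathbf a\in\R^d$ all of whose coordinates lie in the maximal ideal $\mathfrak m$ of $\R$ satisfies $\mathbf a\cdot\mathbf b\in\mathfrak m$ for every $\mathbf b$, the quantity $n(\mathbf a)\assign\#\{\mathbf b\in\F:\mathbf a\cdot\mathbf b=\lambda\}$ vanishes for non-primitive $\mathbf a$ and only ever counts primitive $\mathbf b$, and the non-primitive vectors of $\E$ and $\F$ (at most $q^{(r-1)d}$ of them) contribute only lower-order terms to everything below. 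Writing $N_\lambda(\E,\F)-|\E||\F|/q^r=\sum_{\mathbf a\in\E}\bigl(n(\mathbf a)-|\F|/q^r\bigr)$ and applying Cauchy--Schwarz in $\mathbf a$ after enlarging the range of summation to all of $\R^d$,
\[
\Bigl|N_\lambda(\E,\F)-\frac{|\E||\F|}{q^r}\Bigr|^2\ \le\ |\E|\sum_{\mathbf a\in\R^d}\Bigl(n(\mathbf a)-\frac{|\F|}{q^r}\Bigr)^2,
\]
so it suffices to prove that the sum over $\mathbf a$ is at most $q^{(d-1)(2r-1)}|\F|$; taking square roots then gives the theorem.

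To bound that sum I expand the square. Because each primitive $\mathbf b$ has $\#\{\mathbf a\in\R^d:\mathbf a\cdot\mathbf b=\lambda\}=q^{r(d-1)}$, one gets $\sum_{\mathbf a}n(\mathbf a)=q^{r(d-1)}\cdot\#\{\text{primitive }\mathbf b\in\F\}$ and $\sum_{\mathbf a}n(\mathbf a)^2=\sum_{\mathbf b,\mathbf b'\in\F}N(\mathbf b,\mathbf b')$, where $N(\mathbf b,\mathbf b')\assign\#\{\mathbf a\in\R^d:\mathbf a\cdot\mathbf b=\mathbf a\cdot\mathbf b'=\lambda\}$; the heart of the proof is the evaluation of $N(\mathbf b,\mathbf b')$ for primitive $\mathbf b,\mathbf b'$. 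Writing $\mathbf b-\mathbf b'=\pi^{j}\mathbf w$ with $\mathbf w$ primitive and $0\le j\le r-1$, I expect a trichotomy: $N(\mathbf b,\mathbf b')=q^{r(d-1)}$ when $\mathbf b=\mathbf b'$; $N(\mathbf b,\mathbf b')=0$ when the images of $\mathbf b$ and $\mathbf w$ in $\mathbb F_q^d$ are proportional, since then $\mathbf a\cdot\mathbf w$ is a unit whenever $\mathbf a\cdot\mathbf b=\lambda$, so $\mathbf a\cdot(\mathbf b-\mathbf b')=\pi^{j}(\mathbf a\cdot\mathbf w)$ never vanishes; and $N(\mathbf b,\mathbf b')=q^{r(d-2)+j}$ otherwise, because then the $2\times d$ matrix with rows $\mathbf b,\mathbf w$ has a unimodular $2\times2$ minor, so $\mathbf a\mapsto(\mathbf a\cdot\mathbf b,\mathbf a\cdot\mathbf w)$ is surjective onto $\R^2$ and the constraint $\mathbf a\cdot(\mathbf b-\mathbf b')=0$ reduces to $\mathbf a\cdot\mathbf w\in\pi^{r-j}\R$.

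Feeding the trichotomy back in and using the resulting cancellation of the leading $q^{r(d-2)}|\F|^2$ against the cross-term of the expansion, one is left with
\[
\sum_{\mathbf a\in\R^d}\Bigl(n(\mathbf a)-\frac{|\F|}{q^r}\Bigr)^2\ \ll\ q^{r(d-1)}|\F|\ +\ q^{r(d-2)}\sum_{j\ge1}q^{j}\,\#\bigl\{(\mathbf b,\mathbf b')\in\F^2:\ \mathbf b-\mathbf b'\in\pi^{j}\R^d\bigr\}.
\]
The last factor is a congruence-energy sum: the classes modulo $\pi^{j}\R^d$ partition $\R^d$ into $q^{jd}$ blocks of size $q^{(r-j)d}$, so by Cauchy--Schwarz the $j$-th term is at most $|\F|\min\{q^{(r-j)d},|\F|\}$, and summing the resulting geometric series in $j$ produces the required bound $q^{(d-1)(2r-1)}|\F|$. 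The step I expect to be the main obstacle is precisely the evaluation of $N(\mathbf b,\mathbf b')$ and the bookkeeping around it: over a field ($r=1$) one always has $j=0$ and $N(\mathbf b,\mathbf b')\in\{0,q^{d-2},q^{d-1}\}$, but over $\R$ the ``near-diagonal'' pairs, for which $\mathbf b-\mathbf b'$ lies in a high power of $\mathfrak m$, inflate $N(\mathbf b,\mathbf b')$ up to $q^{r(d-2)+j}$, and it is the valuation-weighted count above that has to absorb their contribution; everything else is elementary linear algebra over $\R$.
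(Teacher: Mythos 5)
The paper never proves this statement: it is imported from Nica \cite{bogan}, where it follows from the spectrum of the bipartite product graph $\P_{q,r}(\R)$ (Theorem \ref{bogan1}, computed via character sums over $\R$) together with the expander mixing lemma (Lemma \ref{expander}). Your route is genuinely different and more elementary: you replace the character-sum/eigenvalue input by a direct second-moment computation whose only nontrivial ingredient is the count $N(\mathbf b,\mathbf b')$ of common solutions of two linear equations, and your trichotomy for that count is correct --- for primitive $\mathbf b\neq\mathbf b'$ with $\mathbf b-\mathbf b'=\pi^j\mathbf w$, $\mathbf w$ primitive, one gets $0$ when the reductions of $\mathbf w$ and $\mathbf b$ are proportional over the residue field (then $\mathbf a\cdot\mathbf w$ is forced to be a unit) and $q^{r(d-2)+j}$ otherwise (a unit $2\times2$ minor makes $\mathbf a\mapsto(\mathbf a\cdot\mathbf b,\mathbf a\cdot\mathbf w)$ surjective, and the annihilator of $\pi^j$ has $q^j$ elements). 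The cancellation of $q^{r(d-2)}|\F|^2$ against the cross term and the bound $E_j\le |\F|\min\{q^{(r-j)d},|\F|\}$ on the congruence energy then do give $\sum_{\mathbf a}(n(\mathbf a)-|\F|/q^r)^2\le C\,q^{(d-1)(2r-1)}|\F|$, and Cauchy--Schwarz finishes. Two caveats are worth recording. First, your argument proves the inequality only up to an absolute multiplicative constant: the diagonal term $q^{r(d-1)}|\F|$ and the $j=1$ term of the geometric series each contribute a full $q^{(d-1)(2r-1)}|\F|$, so you end with roughly $\sqrt{3}$ in place of the stated constant $1$; this is harmless for every application in the paper, all of which are insensitive to constants, but it is a weaker statement than the one displayed. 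Second, the reduction to primitive vectors should be one explicit line rather than an aside: writing $\F_0=\F\cap(\R^0)^d$, the mean is shifted by $|\F_0|/q^r$ per term, and $\sum_{\mathbf a\in\R^d}(|\F_0|/q^r)^2=q^{r(d-2)}|\F_0|^2\le q^{(d-1)(2r-1)-1}|\F|$ since $|\F_0|\le q^{(r-1)d}$, so it really is lower order --- but that is a computation, not an observation. What the spectral route buys is the sharp constant and reusability (the same eigenvalue bound drives Theorem \ref{iosquare} and Theorem \ref{thm:main9}); what yours buys is a self-contained proof with no Fourier analysis over $\R$.
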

Theorem \ref{nc} implies that if $|\E||\F|\ge q^{d(2r-1)+1}$, then for any $\lambda\in \R^*$, the equation $\mathbf{a}\cdot\mathbf{b}=\lambda$ is solvable with $a\in \E, b\in \F$.

Motivated by this result, in this paper we prove the following result on the number of dot-product congruence classes of simplices over finite valuation rings.
\begin{theorem}\label{muoi}
Let $\R$ be a finite valuation ring of order $q^r$. Given a set $\mathcal{E}\subseteq \R^d$.  Suppose that 
\[|\mathcal{E}|\gg q^{\frac{(d-1)(2r-1)+r(k+1)}{2}}.\]
Then the number of dot-product congruence classes of $k$-simplices  in $\E$ is at least $(1-o(1))q^{r\binom{k+1}{2}}$.
\end{theorem}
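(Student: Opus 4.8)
The strategy is to follow the Fourier-analytic argument used over $\mathbb F_q$ in \cite{vinh-spectral}, replacing the spectral input for the finite-field dot-product graph by Theorem~\ref{nc}. Call a congruence type $\mathbf t=(t_{ij})_{1\le i<j\le k+1}\in\R^{\binom{k+1}{2}}$ \emph{generic} if every entry $t_{ij}$ is a unit; since $k$ is fixed and $q\to\infty$, the generic types make up a $(1-o(1))$-fraction of all $q^{r\binom{k+1}{2}}$ types, so it suffices to show that, under the hypothesis on $|\E|$, every generic $\mathbf t$ is realised by an ordered $(k+1)$-tuple of points of $\E$. In fact we will show that
\[
   \nu(\mathbf t)\;:=\;\#\bigl\{(\mathbf x_1,\dots,\mathbf x_{k+1})\in\E^{k+1}:\ \mathbf x_i\cdot\mathbf x_j=t_{ij}\ \text{for all }i<j\bigr\}\;=\;(1+o(1))\,\frac{|\E|^{\,k+1}}{q^{\,r\binom{k+1}{2}}},
\]
which is positive.

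We induct on $k$, building the tuple one vertex at a time. For $k=1$ the statement is Theorem~\ref{nc} with $\F=\E$ and $\lambda=t_{12}\in\R^{*}$: it gives $\nu(t_{12})=|\E|^{2}q^{-r}+O\!\bigl(q^{(d-1)(r-1/2)}|\E|\bigr)$, and the error is $o\!\bigl(|\E|^{2}q^{-r}\bigr)$ exactly when $|\E|\gg q^{(d-1)(r-1/2)+r}=q^{((d-1)(2r-1)+2r)/2}$, which is the $k=1$ case of the hypothesis. For the inductive step, fix a generic type on $k+1$ vertices; its restriction to $\mathbf x_1,\dots,\mathbf x_k$ is generic, so by induction there are $M=(1+o(1))|\E|^{k}q^{-r\binom{k}{2}}$ ordered $k$-tuples in $\E^{k}$ of the right sub-type, and an elementary count (valid because $|\E|$ is large) shows that all but an $o(1)$-proportion of them are \emph{unimodular}, i.e.\ extend to an $\R$-basis of $\R^{d}$; for such a $k$-tuple the map $\mathbf z\mapsto(\mathbf z\cdot\mathbf x_1,\dots,\mathbf z\cdot\mathbf x_k)$ is onto $\R^{k}$. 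For each such $k$-tuple one expands the indicator of the $k$ conditions $\mathbf x_{k+1}\cdot\mathbf x_i=t_{i,k+1}$ $(1\le i\le k)$ by Fourier inversion over $(\R^{k},+)$ — legitimate because a finite valuation ring is a Frobenius ring and hence carries an additive character $\psi$ whose restriction to each nonzero ideal is nontrivial. Summing the zero-frequency term over the $M$ admissible $k$-tuples produces the main term $M|\E|q^{-rk}=(1+o(1))|\E|^{k+1}q^{-r\binom{k+1}{2}}$, as required.

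What remains is to bound the contribution of the nonzero frequencies $\mathbf u\in\R^{k}\setminus\{0\}$, and this is where the local structure of $\R$ enters. One partitions the $\mathbf u$ according to the largest power of a uniformizer $\pi$ of $\R$ dividing $\sum_{i}u_i\mathbf x_i\in\R^{d}$; on each part the inner sum over $\mathbf x_{k+1}\in\E$ collapses to a count of points of $\E$ on an intersection of dot-product \emph{spheres} $\{\mathbf z:\mathbf z\cdot\mathbf x_i=c_i\}$, and after a Cauchy--Schwarz step that decouples the $\mathbf u$-sum from the sum over the $M$ sub-simplices this is estimated by (iterated applications of) Theorem~\ref{nc}. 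The frequencies for which $\sum_{i}u_i\mathbf x_i$ is primitive behave just as they do over a field; the non-primitive ones — which have no counterpart over $\mathbb F_q$ — give the dominant contribution and, once bounded carefully, are what force the exponent up to $\tfrac{(d-1)(2r-1)+r(k+1)}{2}$, the constraint being tightest at the final vertex. Finally, the non-generic types and the $k$-tuples that fail to be in general position are removed by direct counting.

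The chief obstacle is precisely this treatment of the non-primitive frequencies: it is responsible both for the factor $2r-1$ in the threshold and for its dependence on $k$, and it explains why the group-action approach of \cite{groupaction}, which lowers the exponent for the \emph{distance} simplex problem over $\mathbb F_q$, is unavailable here, as already observed in the introduction. Keeping the error term small enough to retain the sharp constant $1+o(1)$ in the count of each generic type — rather than merely $\Omega(1)$ — so as to obtain the clean lower bound $(1-o(1))q^{r\binom{k+1}{2}}$, is the technical heart of the argument.
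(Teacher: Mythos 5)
Your route is genuinely different from the paper's: the paper defines the edge-colored graph $G(\R)$ on vertex set $\R^d$ with $\mathbf{x}\sim_{\beta}\mathbf{y}$ iff $\mathbf{x}\cdot\mathbf{y}=\beta\in\R^{*}$, reads off from Theorem \ref{bogan1} that each color class is an $(n,d(1+o(1)),\lambda)$-graph with $n=q^{dr}$, degree $q^{(d-1)r}$ and $\lambda\le q^{(d-1)(2r-1)/2}$, and then invokes the colored-$K_t$ counting theorem \cite[Theorem 2.7]{vinh-spectral} (restated as Theorem \ref{thm:main9}), whose proof is a second-moment computation via Lemma \ref{expander} and Theorem \ref{expander2}. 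Crucially, that argument only shows that all but $o(q^{r\binom{k+1}{2}})$ of the colorings of $K_{k+1}$ occur in $\E$, which is exactly what the theorem asserts.

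The genuine gap in your proposal is that you set out to prove the strictly stronger statement that \emph{every} generic type $\mathbf{t}$ is realized, with the asymptotically exact count $(1+o(1))|\E|^{k+1}q^{-r\binom{k+1}{2}}$, under the same hypothesis on $|\E|$. For $k=1$ this is Theorem \ref{nc}, but for $k\ge 2$ nothing in your sketch establishes it: the inductive step defers precisely the estimate that carries the content of the theorem (the bound on the nonzero and non-primitive frequencies after the Cauchy--Schwarz decoupling), and you explicitly label it ``the technical heart'' without executing it. A term-by-term application of a $k$-codimensional analogue of Theorem \ref{nc} to each of the $M$ admissible sub-simplices produces a total error that is not $o(M|\E|q^{-rk})$ at the stated threshold, and the decoupling device that is supposed to rescue this is a variance inequality of the type of Theorem \ref{expander2} --- which by its nature controls how many types deviate from the mean count, not whether any \emph{particular} generic type does. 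So the pointwise claim is unsupported, and there is reason to doubt it holds at this exponent for $k\ge2$: over $\mathbb{F}_q$ the analogous ``all congruence classes'' statements require the larger Hart--Iosevich threshold $q^{kd/(k+1)+k/2}$, while $q^{(d+k)/2}$ (the $r=1$ case of the present hypothesis) is only known to give a $(1-o(1))$-proportion of classes. Either supply the pointwise frequency estimates in full (and be prepared to need a stronger hypothesis), or weaken your intermediate claim to ``all but $o(q^{r\binom{k+1}{2}})$ types are realized,'' at which point you are essentially reconstructing the paper's second-moment argument.
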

\subsection{An improvement on the number of triangle areas}
For $\E\subseteq \R^d$, we define
\begin{equation}\label{eqdt1}V_d(\E):=\left\lbrace \det\left(\mathbf{x}^1-\mathbf{x}^{d+1},\ldots, \mathbf{x}^d-\mathbf{x}^{d+1}\right)\colon \mathbf{x}^i\in \E,~1\le i\le d+1\right\rbrace\end{equation}
as the set of $d$-dimensional volumes determined by $\P$, and the set of pinned volumes at  a point $\mathbf{z}\in \E$
\begin{equation}\label{eqdt2}V_d^{\mathbf{z}}(\E):=\left\lbrace \det\left(\mathbf{x}^1-\mathbf{z},\ldots, \mathbf{x}^d-\mathbf{z}\right)\colon \mathbf{x}^i\in \E,~1\le i\le d\right\rbrace.\end{equation}

In \cite{area}, Iosevich,  Rudnev, and Zhai showed that if $|\E|\ge 64q\log q$, then there exists a point $\mathbf{z}\in \E$ such that $|V_2^\mathbf{z}(\E)|\ge q/2$. The finite cyclic ring analogue of this problem  is recently investigated by Yazici \cite{ya}. In particular, she proved that for $\E\subseteq \mathbb{Z}/p^r\mathbb{Z}$, if $|\E|\ge p^{2r-(1/2)}$ then  $|V_2(\E)|\ge \frac{p^r}{4}\frac{1+p}{p}-1$.  This implies that if $r=1$, Yazici's bound is  weaker than that of \cite{area}. In this section, we will give an improvement of these results in the setting of finite valuation rings.

\begin{theorem}\label{mainthm297}
Let $\R$ be a finite valuation ring of order $q^r$. Let $\mathcal{E}$ be a set of points in $\R^2$. If $q^{2r-1}=o(|\E|)$, then there exists $\mathbf{z}\in \E$ such that $|V_2^{\mathbf{z}}(\E)|= (1-o(1))q^r$.
\end{theorem}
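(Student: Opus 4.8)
The plan is to study, for $\mathbf z\in\E$ and a unit $t\in\R^{*}$, the pinned count
\[
\nu(\mathbf z,t):=\#\{(\mathbf x,\mathbf y)\in\E^{2}:\det(\mathbf x-\mathbf z,\mathbf y-\mathbf z)=t\},
\]
so that $|V_{2}^{\mathbf z}(\E)|\ge\#\{t\in\R^{*}:\nu(\mathbf z,t)>0\}$. From the pointwise inequality $\mathbf 1_{\{u=0\}}\le(1-q^{r}u/|\E|^{2})^{2}$, valid for every real $u\ge 0$, taking $u=\nu(\mathbf z,t)$ and summing over $t\in\R^{*}$ and $\mathbf z\in\E$ bounds the number of pairs $(\mathbf z,t)\in\E\times\R^{*}$ with $\nu(\mathbf z,t)=0$ above by
\[
|\E|\,|\R^{*}|\;-\;\frac{2q^{r}}{|\E|^{2}}\sum_{\mathbf z\in\E}\sum_{t\in\R^{*}}\nu(\mathbf z,t)\;+\;\frac{q^{2r}}{|\E|^{4}}\sum_{\mathbf z\in\E}\sum_{t\in\R^{*}}\nu(\mathbf z,t)^{2}.
\]
Since $|\R^{*}|=(1-o(1))q^{r}$, the theorem reduces to the two estimates
\[
(\mathrm{I})\quad \sum_{\mathbf z\in\E}\sum_{t\in\R^{*}}\nu(\mathbf z,t)=(1-o(1))|\E|^{3},\qquad (\mathrm{II})\quad \sum_{\mathbf z\in\E}\sum_{t\in\R^{*}}\nu(\mathbf z,t)^{2}\le(1+o(1))\frac{|\E|^{5}}{q^{r}};
\]
together they force the displayed quantity to be $o(q^{r}|\E|)$, hence some $\mathbf z\in\E$ misses only $o(q^{r})$ units as pinned areas, i.e.\ $|V_{2}^{\mathbf z}(\E)|=(1-o(1))q^{r}$.

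Estimate $(\mathrm{I})$ is the easy half: $\det(\mathbf x-\mathbf z,\mathbf y-\mathbf z)$ is a unit precisely when the images of $\mathbf x-\mathbf z$ and $\mathbf y-\mathbf z$ in $\R/\mathfrak m\cong\FF_{q}$ are linearly independent, so the left side of $(\mathrm{I})$ equals $|\E|^{3}$ minus the number of triples $(\mathbf z,\mathbf x,\mathbf y)\in\E^{3}$ whose reductions modulo $\mathfrak m$ are affinely dependent in $\FF_{q}^{2}$. Writing $W(\ell)$ for the number of points of $\E$ whose reduction lies on an affine line $\ell\subseteq\FF_{q}^{2}$, this defect is at most $\sum_{\ell}W(\ell)^{3}+O(q^{2r-2}|\E|^{2})$; each of the $q$ points of $\ell$ has $q^{2(r-1)}$ preimages in $\R^{2}$, so $\max_{\ell}W(\ell)\le q^{2r-1}=o(|\E|)$ and $\sum_{\ell}W(\ell)^{2}=(1+o(1))|\E|^{2}$, whence $\sum_{\ell}W(\ell)^{3}\le(\max_{\ell}W(\ell))\sum_{\ell}W(\ell)^{2}=o(|\E|^{3})$. (Theorem~\ref{nc} with $d=2$ gives the pointwise asymptotic $\nu(\mathbf z,t)=|\E|^{2}/q^{r}+O(q^{r-1/2}|\E|)$, but summing its error over $t$ and $\mathbf z$ would only suffice under the stronger hypothesis $|\E|\gg q^{2r-1/2}$, which is why the combinatorial count is used here.)

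The core is estimate $(\mathrm{II})$, equivalently $T^{*}\le(1+o(1))|\E|^{5}/q^{r}$ for
\[
T^{*}:=\#\bigl\{(\mathbf z,\mathbf x,\mathbf y,\mathbf x',\mathbf y')\in\E^{5}:\det(\mathbf x-\mathbf z,\mathbf y-\mathbf z)=\det(\mathbf x'-\mathbf z,\mathbf y'-\mathbf z)\in\R^{*}\bigr\}.
\]
The useful structural fact is that a pair $(\mathbf u,\mathbf v)$ of vectors in $\R^{2}$ with $\det(\mathbf u,\mathbf v)=t\in\R^{*}$ is of the form $(g\mathbf e_{1},\,t\,g\mathbf e_{2})$ for a unique $g\in\mathrm{SL}_{2}(\R)$, so $\nu(\mathbf z,t)=\#\{g\in\mathrm{SL}_{2}(\R):\mathbf z+g\mathbf e_{1}\in\E,\ \mathbf z+t\,g\mathbf e_{2}\in\E\}$ and $T^{*}=\sum_{t\in\R^{*}}\sum_{\mathbf z\in\E}\nu(\mathbf z,t)^{2}$. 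Expanding $\nu(\mathbf z,t)^{2}$ over two copies $g,h$ of $\mathrm{SL}_{2}(\R)$ and detecting the four membership conditions by a nontrivial additive character $\psi$ of $\R$, the all‑zero‑frequency term contributes $|\E|\cdot|\R^{*}|\cdot|\mathrm{SL}_{2}(\R)|^{2}|\E|^{4}/q^{8r}$, which — using $|\mathrm{SL}_{2}(\R)|=(1-o(1))q^{3r}$ and $|\R^{*}|=(1-o(1))q^{r}$ — equals $|\E|^{5}/q^{r}$ up to a factor $1+o(1)$, the desired main term (and independently $T^{*}\ge(1-o(1))|\E|^{5}/q^{r}$ by Cauchy--Schwarz and $(\mathrm{I})$). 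What remains are the off‑diagonal frequency contributions, which reduce to incomplete character sums over $\mathrm{SL}_{2}(\R)$ and over $\R^{2}$; one bounds these by the same eigenvalue/incomplete‑sum estimates underlying Theorem~\ref{nc} (Nica's bound is exactly of this spectral type), after which one sums over $\mathbf z\in\E$, $t\in\R^{*}$ and all nonzero frequencies.

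The main obstacle is to push this total error below $|\E|^{5}/q^{r}$, which is exactly where the hypothesis $q^{2r-1}=o(|\E|)$ is consumed and where the valuation ring is subtler than a field. Two issues arise. First, the frequencies $s\in\R$ must be organised along the chain $\R\supset\mathfrak m\supset\dots\supset\mathfrak m^{r}=0$: a frequency in $\mathfrak m^{j}\setminus\mathfrak m^{j+1}$ does not annihilate the relevant exponential sum the way a unit frequency does, so the estimate must be run scale by scale and a geometric series in $j$ summed; the deepest scale $j=r-1$ (the socle $\mathfrak m^{r-1}$, on which $\psi$ descends to a character of $\FF_{q}$ and the sum becomes a determinant count over the residue field, handled by the $r=1$ case of Theorem~\ref{nc} or the line count above) is what makes the threshold $q^{2r-1}$ rather than the $q^{2r-1/2}$ that results from the crude device of enlarging the $\mathbf z$‑sum to all of $\R^{2}$. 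Second, the configurations with non‑unit pinned area discarded in passing to $T^{*}$ must be shown negligible by the same line‑counting bound, using once more $\max_{\ell}W(\ell)\le q^{2r-1}=o(|\E|)$; this is also why no regularity hypothesis on $\E$ is needed, since even a structured set (say a dense subset of $\mathfrak m\times\R$ together with a few extra points) has the main term dominating, while such a set of size exactly $q^{2r-1}=|\mathfrak m\times\R|$ realises only non‑unit areas, showing that the threshold cannot be improved.
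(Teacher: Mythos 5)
Your global architecture (reduce to a first‑moment estimate (I) and a second‑moment estimate (II) over all pins $\mathbf{z}\in\E$, then pigeonhole) is internally consistent, and your estimate (I) is fine. But the theorem's entire content sits in your estimate (II), $\sum_{\mathbf{z}\in\E}\sum_{t\in\R^*}\nu(\mathbf{z},t)^2\le(1+o(1))|\E|^5/q^r$, and this is asserted rather than proved: the passage ``the off‑diagonal frequency contributions reduce to incomplete character sums over $\mathrm{SL}_2(\R)$ \dots one bounds these by the same eigenvalue estimates underlying Theorem~\ref{nc}'' is exactly the step that does not close at the claimed threshold. The spectral/character‑sum machinery available in this paper gives, for a \emph{fixed} pin $\mathbf{z}$, a bound of the shape $\sum_t\nu(\mathbf{z},t)^2\le |\E|^4/q^r+q^{2r-1}|\E|^2 m_{\mathbf{z}}$, where $m_{\mathbf{z}}$ is the maximal number of points of $\E$ on a line through $\mathbf{z}$ (this is Theorem~\ref{iosquare} with $d=2$). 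Summing this over $\mathbf{z}\in\E$ requires controlling $\sum_{\mathbf{z}}m_{\mathbf{z}}$, and under the sole hypothesis $q^{2r-1}=o(|\E|)$ this sum can be as large as $q^{2r}$ (e.g.\ if $\E$ contains most of a line), in which case the error term $q^{2r-1}|\E|^2\sum_{\mathbf{z}}m_{\mathbf{z}}$ swamps $|\E|^5/q^r$ when $|\E|$ is near the threshold. Your proposed remedy --- running the character sums scale by scale along the chain of ideals --- addresses a different difficulty (non‑unit frequencies) and does not remove this line‑concentration loss, which is present already over a finite field ($r=1$). So either (II) needs a genuinely new argument, or it must be abandoned as stated.

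The paper avoids this obstacle by not averaging over all pins and not using all of $\E$ on both sides of the bilinear form. It first uses a point–line incidence bound (Theorem~\ref{mot}, itself from the Erd\H{o}s--R\'enyi graph) to locate a single point $\mathbf{z}\in\E$ lying on at least $q^r/8$ lines each containing at least $q^{r-1}+1$ points of $\E$ (Lemma~\ref{lm1297}); it then selects one point per such line to form a set $F$ with $|F|\approx q^r/8$ and, crucially, at most \emph{one} point of $F$ on any line through $\mathbf{z}$, i.e.\ $m=1$ in Theorem~\ref{manytimes}. The energy bound applied to $F\cdot G$ (with $G$ the rotated copy of $\E$) then has error $q^{2r-1}\cdot|F||G|$, which is dominated by the main term precisely when $q^{2r-1}=o(|\E|)$. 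That selection step is the missing idea in your proposal; without it (or a substitute handling pins with large $m_{\mathbf{z}}$), your proof does not go through.
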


Theorem \ref{mainthm297} implies that when $\R$ is a finite field, i.e. $r=1$, in order to get $(1-o(1))q$ distinct areas we only the condition $q=o(|\E|)$. This improves the threshold $64q\log q$ on the cardinality of $\E$ in \cite{area}. When $\R$ is a finite cyclic ring, i.e. $q$ is a prime, the bound $p^{2r-\frac{1}{2}}$ in \cite{ya} is decreased to $p^{2r-1}$.

By using inductive arguments, one can obtain a similar result for higher dimensional cases, which is also a generalization of the main result in \cite{vinh-areas}.
\begin{theorem}\label{co2}
Let $\R$ be a finite valuation ring of order $q^r$, and let $\E$ be a set of points in $\R^d$. If $q^{r-1}\cdot q^{r(d-1)}=o(|\E|)$, then 
then there exists a point $\mathbf{z}\in \E$ such that $|V_d^{\mathbf{z}}(\E)|= (1-o(1))q^r$.
\end{theorem}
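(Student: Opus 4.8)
The plan is to induct on the dimension $d$, with Theorem \ref{mainthm297} serving as the base case $d=2$. Assume the statement holds in dimension $d-1$, and let $\E\subseteq\R^{d}$ satisfy $q^{r-1}q^{r(d-1)}=o(|\E|)$. Write $\mathfrak{m}$ for the maximal ideal of $\R$, so that $\R^{*}=\R\setminus\mathfrak{m}$, $|\mathfrak{m}|=q^{r-1}$, and the residue field has size $|\R/\mathfrak{m}|=q$.

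First I would fix a good slicing coordinate. A single coset of $\mathfrak{m}^{d}$ in $\R^{d}$ has exactly $q^{(r-1)d}$ points, whereas for $q$ large one has $|\E|>q^{rd-1}\ge q^{(r-1)d+1}$; hence $\E$ is not contained in any coset of $\mathfrak{m}^{d}$, so after relabelling coordinates the first coordinates of the points of $\E$ occupy at least two distinct residue classes modulo $\mathfrak{m}$. For $t\in\R$ set $\E_{t}=\{\tilde{\mathbf{x}}\in\R^{d-1}\colon(t,\tilde{\mathbf{x}})\in\E\}$, and choose $t_{0}$ maximising $|\E_{t}|$, so that $|\E_{t_{0}}|\ge|\E|/q^{r}$. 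Since $q^{r-1}q^{r(d-1)}=o(|\E|)$, this gives $q^{r-1}q^{r(d-2)}=o(|\E_{t_{0}}|)$, which is precisely the hypothesis required to apply the induction hypothesis to $\E_{t_{0}}\subseteq\R^{d-1}$. This produces $\tilde{\mathbf{z}}\in\E_{t_{0}}$ with $|V_{d-1}^{\tilde{\mathbf{z}}}(\E_{t_{0}})|=(1-o(1))q^{r}$; put $\mathbf{z}=(t_{0},\tilde{\mathbf{z}})\in\E$.

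Next I would lift these $(d-1)$-dimensional volumes. By the choice of coordinate there is a point $\mathbf{x}^{d}\in\E$ whose first coordinate $t_{1}$ satisfies $t_{1}\not\equiv t_{0}\pmod{\mathfrak{m}}$, hence $\lambda:=t_{1}-t_{0}\in\R^{*}$. For arbitrary $\tilde{\mathbf{x}}^{1},\ldots,\tilde{\mathbf{x}}^{d-1}\in\E_{t_{0}}$ set $\mathbf{x}^{i}=(t_{0},\tilde{\mathbf{x}}^{i})\in\E$; then $\mathbf{x}^{i}-\mathbf{z}=(0,\tilde{\mathbf{x}}^{i}-\tilde{\mathbf{z}})$ for $1\le i\le d-1$, while $\mathbf{x}^{d}-\mathbf{z}$ has first entry $\lambda$, so cofactor expansion along the first column gives
\[
\det\bigl(\mathbf{x}^{1}-\mathbf{z},\ldots,\mathbf{x}^{d}-\mathbf{z}\bigr)=\pm\,\lambda\cdot\det\bigl(\tilde{\mathbf{x}}^{1}-\tilde{\mathbf{z}},\ldots,\tilde{\mathbf{x}}^{d-1}-\tilde{\mathbf{z}}\bigr).
\]
Consequently $V_{d}^{\mathbf{z}}(\E)\supseteq\pm\lambda\cdot V_{d-1}^{\tilde{\mathbf{z}}}(\E_{t_{0}})$, and since multiplication by the unit $\lambda$ permutes $\R$ we obtain $|V_{d}^{\mathbf{z}}(\E)|\ge|V_{d-1}^{\tilde{\mathbf{z}}}(\E_{t_{0}})|=(1-o(1))q^{r}$; as $V_{d}^{\mathbf{z}}(\E)\subseteq\R$ has at most $q^{r}$ elements, this forces $|V_{d}^{\mathbf{z}}(\E)|=(1-o(1))q^{r}$, completing the induction.

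The determinant identity and the bijectivity of multiplication by a unit are routine. The two points that genuinely require care are: (i) the opening reduction to a coordinate along which $\E$ spreads over at least two residue classes of $\mathfrak{m}$ — without it the transversal point $\mathbf{x}^{d}$ need not exist and $\lambda$ could be a non-unit, which would destroy the bijection and allow the lifted volumes to all be non-units; and (ii) the bookkeeping verifying that the densest slice $\E_{t_{0}}$ still satisfies the hypothesis in dimension $d-1$, which is exactly why the threshold drops by one factor of $q^{r}$ per dimension, matching the passage from $q^{r-1}q^{r(d-1)}$ to $q^{r-1}q^{r(d-2)}$. Since $d$ is fixed, only boundedly many $o(1)$ errors are accumulated, so there is no quantitative degradation.
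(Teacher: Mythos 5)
Your proof is correct and follows essentially the same route as the paper: induct on $d$ with Theorem \ref{mainthm297} as the base case, pass to the densest hyperplane slice (which still satisfies the hypothesis one dimension down), and lift the $(d-1)$-dimensional volumes by a cofactor expansion that multiplies them by a unit. The only cosmetic difference is that you pin inside the slice and take one transversal point with unit offset, whereas the paper pins at a point $\mathbf{z}$ off the slice with $z_d\in\R^*$ and uses unpinned volumes in the slice; your handling of the unit condition via residue classes mod the maximal ideal is, if anything, slightly more careful than the paper's.
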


\subsection{An improvement on permanents of matrices}
Let $M$ be an $k\times k$ matrix. The permanent of $M$ is defined by 
\[\mathtt{Per}(M):=\sum_{\sigma\in S_k}\prod_{i=1}^ka_{i\sigma(i)}.\]
For $\E\subseteq \mathbb{F}_q^k$, let $M_k(\E)$ denote the set of $k\times k$ matrices with rows in $\E$, and $P_k(\E)=\{\mathtt{Per}(M)\colon M\in M_k(\E)\}$. The author of \cite{vinh-perma} proved that for any $\A\subseteq \mathbb{Z}/p^r\mathbb{Z}$, if
\[|\A|\gg rp^{r-\frac{1}{2}+\frac{1}{2k}},\]
then $\left(\mathbb{Z}/p^r\mathbb{Z}\right)^*\subseteq P_k(\A^k)$ with $\gcd(k, p^r)=1$, where $\left(\mathbb{Z}/p^r\mathbb{Z}\right)^*$ is the set of all units in $\mathbb{Z}/p^r\mathbb{Z}$

In this paper, we are able to improve the threshold $q^{r-\frac{1}{2}+\frac{1}{2k}}$ to $q^{\frac{(k-1)(2r-1)+r}{2k-1}}$ as in the following theorem. 
\begin{theorem}\label{perper}
Let $\R$ be a finite valuation rings of order $q^r$, and $k$ be an integer with $\gcd(k, q^r)=1$. For any $\A\subset \R$, if 
\[|\A|\gg q^{\frac{(k-1)(2r-1)+r}{2k-1}},\]
then $|P_k(\A^k)|=(1-o(1))q^r$.
\end{theorem}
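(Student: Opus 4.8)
Since the non-units of $\R$ form an ideal $\R^0$ of size $q^{r-1}$, one has $|\R^*|=(1-o(1))q^r$, so it is enough to show that $P_k(\A^k)$ contains all but $o(q^r)$ elements of $\R$.

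The reduction to a dot product is the cofactor expansion of the permanent along the last row: writing $M\in M_k(\A^k)$ as a top $(k-1)\times k$ block $M_0$ over a last row $\mathbf{a}\in\A^k$, one has $\mathtt{Per}(M)=\mathbf{a}\cdot\mathbf{c}(M_0)$, where the $j$-th coordinate of the permanental cofactor vector $\mathbf{c}(M_0)\in\R^k$ is the permanent of $M_0$ with its $j$-th column deleted. Thus $P_k(\A^k)\supseteq\{\mathbf{a}\cdot\mathbf{c}:\mathbf{a}\in\A^k,\ \mathbf{c}\in\F\}$ with $\F=\{\mathbf{c}(M_0):M_0\in M_{(k-1)\times k}(\A)\}\subseteq\R^k$. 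For any fixed choice of the first $k-2$ rows of $M_0$, the vector $\mathbf{c}(M_0)$ depends \emph{linearly} on the remaining free row, via a $k\times k$ matrix with zero diagonal whose off-diagonal entries are $(k-2)\times(k-2)$ permanents; choosing that configuration suitably — here one uses $\gcd(k,q^r)=1$ and the fact that $\A$ contains $(1-o(1))|\A|$ units — this linear map has rank $k$, which both shows $|\F|=|\A|^k$ and yields a convenient bilinear description $\mathtt{Per}(M)=\mathbf{a}^\top D\,\mathbf{b}$ with $D\in \mathrm{GL}_k(\R)$ fixed and $\mathbf{a},\mathbf{b}$ free in $\A^k$.

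Set $N(\lambda)=\#\{(\mathbf{a},\mathbf{w})\in\A^k\times D(\A^k):\mathbf{a}\cdot\mathbf{w}=\lambda\}$, so that $\sum_\lambda N(\lambda)=|\A|^{2k}$ and $P_k(\A^k)\supseteq\{\lambda:N(\lambda)>0\}$. Applying Theorem~\ref{nc} directly (to the two sets $\A^k$ and $D(\A^k)$, with $d=k$) gives $N(\lambda)>0$ for every $\lambda\in\R^*$ as soon as $|\A|^{2k}/q^r\gg q^{(k-1)(r-1/2)}|\A|^k$, i.e. $|\A|\gg q^{r-1/2+1/(2k)}$, which is only the previously known exponent. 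To improve it I would pass to the second moment: by Cauchy--Schwarz,
\[ |\{\lambda:N(\lambda)>0\}|\ \ge\ \frac{|\A|^{4k}}{\sum_\lambda N(\lambda)^2}, \]
and here $\sum_\lambda N(\lambda)^2=\#\{(\mathbf{a},\mathbf{w},\mathbf{a}',\mathbf{w}'):\mathbf{a}\cdot\mathbf{w}=\mathbf{a}'\cdot\mathbf{w}'\}=\#\{(\mathbf{a},-\mathbf{a}')\cdot(\mathbf{w},\mathbf{w}')=0\}$ is itself a dot-product count in $\R^{2k}$ between two sets of size $|\A|^{2k}$. Estimating this count by the bound underlying Theorem~\ref{nc}, after isolating the contribution of pairs in which one of the two $2k$-dimensional vectors has all coordinates in $\R^0$, gives $\sum_\lambda N(\lambda)^2=(1+o(1))|\A|^{4k}/q^r$ precisely when $|\A|\gg q^{((k-1)(2r-1)+r)/(2k-1)}$; combining this with the displayed inequality and the trivial bound $|P_k(\A^k)|\le q^r$ yields $|P_k(\A^k)|=(1-o(1))q^r$.

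I expect the heart of the argument — and the main obstacle — to be this second-moment estimate. Summing the single-value bound of Theorem~\ref{nc} over all $\lambda\in\R$ is too wasteful (it loses a factor $q^r$ and only recovers the old exponent), so the energy has to be handled as one $2k$-dimensional dot-product count, with the off-diagonal spectral term controlled uniformly rather than value by value; what then remains is to bound by hand the degenerate contributions coming from non-unit vectors, on which Theorem~\ref{nc} is silent, and it is the worst-case concentration of $\A$ modulo $\R^0$ surfacing at that step that pins down the exponent $\big((k-1)(2r-1)+r\big)/(2k-1)$. A secondary technical point is the rank-$k$ property of the auxiliary matrix $D$, which modulo $\R^0$ becomes a polynomial non-vanishing statement over $\mathbb{F}_q$ and is precisely where the arithmetic hypothesis $\gcd(k,q^r)=1$ enters.
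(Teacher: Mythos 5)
Your overall architecture coincides with the paper's: fix $k-2$ rows, write the permanent as a bilinear form $\mathbf{a}^{\top}D\mathbf{b}$ in the two remaining rows, and then run Cauchy--Schwarz against a second-moment (energy) bound for the resulting dot-product count between two sets of size $|\A|^k$ in $\R^k$. (The paper takes the $k-2$ fixed rows all equal to a constant unit vector $(u,\dots,u)$ with $u\in\A\cap\R^*$, so that $D$ is an explicit scalar multiple of $J-I$ with $J$ the all-ones matrix, and the injectivity of $\mathbf{b}\mapsto D\mathbf{b}$ is immediate from the coprimality hypothesis; it then invokes Theorem \ref{manytimes}, which packages exactly your Cauchy--Schwarz step.) The threshold you derive is the correct one.

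The gap is the second-moment estimate itself, which you rightly flag as the heart of the argument but whose mechanism you misdiagnose. The quantity $\sum_\lambda N(\lambda)^2$ is the number of solutions of $(\mathbf{a},-\mathbf{a}')\cdot(\mathbf{w},\mathbf{w}')=0$ in $\R^{2k}$, and the paper bounds it (Theorem \ref{iosquare}) by an edge count in the Erd\H{o}s--R\'enyi graph $\E_{q,2k}(\R)$. Discarding the points all of whose coordinates lie in $\R^0$ is a harmless preliminary pruning (it removes $o(|\A|)$ elements of $\A$) and is not what determines the exponent. The loss beyond the main term $|\A|^{4k}/q^r$ comes instead from the fact that the vertices of $\E_{q,2k}(\R)$ are \emph{projective} points: distinct vectors that are unit-scalar multiples of one another collapse to the same vertex, so the mixing lemma can only be applied after partitioning each point set into $m$ classes of projectively distinct points, where $m=\max_{\mathbf{x}}\left\vert \A^k\cap\{s\mathbf{x}\colon s\in\R^*\}\right\vert\le|\A|$. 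This multiplicity $m=|\A|$ is precisely the factor multiplying the error term $q^{(k-1)(2r-1)}|\A|^{2k}$, and it --- not any worst-case concentration of $\A$ modulo $\R^0$ --- is what pins down the exponent $\frac{(k-1)(2r-1)+r}{2k-1}$. Without this partitioning device (or an equivalent character-sum treatment of the collinear pairs), your plan of ``estimating the count by the bound underlying Theorem \ref{nc}'' does not close: that bound is stated only for unit values of the dot product and, applied naively to the value $0$ in $\R^{2k}$, gives nothing.
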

\subsection{Monochromatic sum-product}
For $A_1, A_2\subset \mathbb{F}_p$, where $p$ is a prime, Shkredov \cite{shkredov} showed that $|A_1| |A_2| \geq 20p$, then there exist $x, y \in \mathbb{F}_p$ such that $x + y \in A_1, x\cdot y \in A_2$. Cilleruelo \cite{cils} extended this result to arbitrary finite fields $\mathbb{F}_q$ of $q$ elements using Sidon sets as follows.
\begin{theorem}[\cite{cils}]\label{cungmau1}
For any $X_1, X_2\subseteq \mathbb{F}_q$ of cardinality $|X_1||X_2|>2q$, there exist $x, y\in \mathbb{F}_q$ such that $x+y\in X_1, x\cdot y\in X_2$.
\end{theorem}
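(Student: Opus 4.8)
The plan is to turn the existence statement into a lower bound for the number of solutions, evaluate that number exactly using the quadratic character, and then estimate the resulting character sum by Fourier analysis carefully enough to recover the sharp constant. Assume first that $q$ is odd and let $\chi$ denote the quadratic multiplicative character of $\mathbb{F}_q$, with the convention $\chi(0)=0$. Put
\[
N:=\#\{(x,y)\in\mathbb{F}_q^2:\ x+y\in X_1,\ xy\in X_2\};
\]
it suffices to show $N>0$. Since a pair $(x,y)$ with $x+y=s$, $xy=p$ is precisely an ordered pair of roots of $T^2-sT+p$, the number of such pairs equals $1+\chi(s^2-4p)$ (two when $s^2-4p$ is a nonzero square, one when it is zero, none otherwise). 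Summing over $(s,p)\in X_1\times X_2$ gives
\[
N=|X_1||X_2|+S,\qquad S:=\sum_{s\in X_1}\sum_{p\in X_2}\chi(s^2-4p).
\]
Hence everything reduces to proving $|S|\le\sqrt{2q\,|X_1||X_2|}$: when $|X_1||X_2|>2q$ this forces $S>-|X_1||X_2|$, i.e.\ $N>0$. (Up to a linear change of variables, $S$ counts representations of elements of $X_1\times X_2$ by the Sidon set $\{(t,t^2):t\in\mathbb{F}_q\}\subseteq\mathbb{F}_q^2$, via $(x,x^2)+(y,y^2)=(x+y,(x+y)^2-2xy)$; this is the Sidon-set mechanism underlying the statement.)

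To bound $S$ I would fix a nontrivial additive character $\psi$ of $\mathbb{F}_q$, write $\mathbf{1}_{X_2}(p)$ as an average of the $\psi(\xi p)$, and substitute $w=s^2-4p$; the inner sum $\sum_p\mathbf{1}_{X_2}(p)\chi(s^2-4p)$ then becomes $\tfrac1q\sum_{\xi}\widehat{\mathbf{1}_{X_2}}(\xi)\,\psi(\xi s^2/4)\,G(-\xi/4)$, where $G(\eta)=\sum_w\chi(w)\psi(\eta w)$ is the Gauss sum; recall $G(0)=0$ and $|G(\eta)|=\sqrt q$ for $\eta\neq0$, so only $\xi\neq0$ contributes. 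Summing over $s\in X_1$,
\[
S=\frac1q\sum_{\xi\neq0}\widehat{\mathbf{1}_{X_2}}(\xi)\,G(-\xi/4)\,h(\xi),\qquad h(\xi):=\sum_{s\in X_1}\psi(\xi s^2/4).
\]
The triangle inequality and Cauchy--Schwarz in $\xi$, together with Parseval $\bigl(\sum_\xi|\widehat{\mathbf{1}_{X_2}}(\xi)|^2=q|X_2|\bigr)$, give $|S|\le\sqrt{|X_2|}\,\|h\|_2$. Finally
\[
\|h\|_2^2=\sum_\xi|h(\xi)|^2=q\cdot\#\{(s,s')\in X_1^2:\ s^2=s'^2\}\le 2q|X_1|,
\]
because $s^2=s'^2$ forces $s'=\pm s$. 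Combining, $|S|\le\sqrt{2q\,|X_1||X_2|}$, which closes the argument.

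The step I expect to be the crux is the second-moment identity $\|h\|_2^2=q\cdot\#\{(s,s')\in X_1^2:s^2=s'^2\}$ and retaining the bound $\#\{(s,s')\in X_1^2:s^2=s'^2\}\le 2|X_1|$ without slack: this is exactly where the clean threshold $2q$ originates. Cruder estimates—for instance bounding $h(\xi)$ pointwise by the Gauss sum $\sqrt q$ and then using $\|\widehat{\mathbf{1}_{X_1}}\|_1\le\sqrt{q|X_1|}$—only yield $|X_1||X_2|\gg q$ with a worse constant. For $q$ even one replaces the discriminant criterion for $T^2+sT+p$ by the Artin--Schreier one (solvable iff $s=0$ or $\operatorname{Tr}_{\mathbb{F}_q/\mathbb{F}_2}(p/s^2)=0$) and runs the same Fourier estimate with the Gauss sum replaced by the analogous exponential sum, so the theorem holds for all $q$.
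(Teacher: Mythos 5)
Your proof is correct, but note that the paper does not actually prove Theorem \ref{cungmau1}: it is quoted from \cite{cils}, so the only in-paper argument to compare against is the proof of its generalization, Theorem \ref{r1}, in Section 6. Your route is genuinely different from both. You compute the number of ordered pairs exactly as $1+\chi(s^2-4p)$ and control the bilinear character sum $S$ by expanding $\mathbf{1}_{X_2}$ in additive characters, pulling out the quadratic Gauss sum, and then --- crucially --- spending Cauchy--Schwarz on the weight $h(\xi)=\sum_{s\in X_1}\psi(\xi s^2/4)$ and evaluating $\|h\|_2^2$ via the second moment $\#\{s^2=s'^2\}\le 2|X_1|$; all steps check out, and this is exactly where the clean constant $2$ comes from, as you say. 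The source \cite{cils} packages the same mechanism abstractly through the Sidon set $\{(t,t^2)\}$ (which you flag parenthetically), while the paper's own proof of Theorem \ref{r1} instead rewrites $x+y=x_1$, $xy=x_2$ as $(x_1/2)^2-x_2=z^2$ and counts edges between two vertex sets in the Erd\H{o}s--R\'enyi graph $\mathcal{E}_{q,3}(\R)$ via the expander mixing lemma; that method generalizes painlessly to finite valuation rings but loses constants (the passage to the square sets $A_1,A_3,A_4$ costs factors of $2$, and the restriction to $\R^*$ is forced), whereas your direct Gauss-sum computation is tied to $\mathbb{F}_q$ but recovers the sharp threshold $2q$. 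Your closing remark on even $q$ is only a sketch, but the Artin--Schreier substitution does work (indeed it gives the better threshold $q$ there since squaring is a bijection in characteristic $2$); since the surrounding paper assumes $q$ odd throughout, this is a non-issue.
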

In this section, we extend Theorem \ref{cungmau1} to the setting of finite valuation rings. 
\begin{theorem}\label{r1}
Let $\R$ be a finite valuation ring of order $q^r$.  For any $X_1,X_2 \subseteq \R^{*}$ of cardinality
\[ |X_1||X_2| >\frac{q^{4r-1}}{(q^r-q^{r-1})^2} ,\] there exist $x,y \in \R^{*}$ such that $x+y \in X_1$ and $x\cdot y \in X_2$.
\end{theorem}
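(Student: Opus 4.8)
The plan is to show that
\[
N:=\#\{(x,y)\in\R\times\R:\ x+y\in X_1,\ xy\in X_2\}
\]
is positive; since $\R$ is local and $X_2\subseteq\R^{*}$, any pair counted by $N$ has $xy\in\R^{*}$ and hence $x,y\in\R^{*}$, so $N>0$ already gives the theorem. First I would rewrite $N$ by parametrising the line $x+y=s$ by $x$ (so $y=s-x$): $N=\sum_{s\in X_1}\#\{x\in\R:\ x(s-x)\in X_2\}$. Fixing a primitive additive character $\psi$ of $(\R,+)$ and writing $\widehat{X}(b):=\sum_{u\in X}\psi(-bu)$, expanding the indicator of $X_2$ gives
\[
N=\frac{1}{q^{r}}\sum_{b\in\R}\widehat{X_2}(b)\sum_{s\in X_1}\ \sum_{x\in\R}\psi\big(b(sx-x^{2})\big),
\]
and the term $b=0$ contributes the main term $|X_1|\,|X_2|$.

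For $b\ne0$, since $2\in\R^{*}$ I can complete the square, $sx-x^{2}=\tfrac{s^{2}}{4}-(x-\tfrac s2)^{2}$, so that the inner $x$-sum equals $\psi(bs^{2}/4)\,\overline{G(b)}$, where $G(b):=\sum_{z\in\R}\psi(bz^{2})$ is the quadratic Gauss sum over $\R$. Hence
\[
N-|X_1|\,|X_2|=\frac{1}{q^{r}}\sum_{b\ne0}\widehat{X_2}(b)\,\overline{G(b)}\,S_1(b),\qquad S_1(b):=\sum_{s\in X_1}\psi\!\big(bs^{2}/4\big).
\]
The key analytic input is the valuation-graded Gauss-sum bound $|G(b)|\le q^{(r+\ord(b))/2}$ for $\ord(b)\in\{0,1,\dots,r-1\}$, which I would prove as a lemma by the standard reduction of a Gauss sum over a finite valuation ring to a Gauss sum over its residue field (rescaling $b$ by a power of the uniformiser and iterating), or import from the literature on finite valuation rings.

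I would then stratify the sum over $b\ne0$ according to $j:=\ord(b)\in\{0,\dots,r-1\}$ and apply Cauchy--Schwarz within each stratum. Both second moments can be computed exactly: expanding the squares and summing over $b$ of fixed valuation gives
\[
\sum_{\ord(b)=j}|\widehat{X_2}(b)|^{2}\le (q^{r-j}-q^{r-j-1})\,E_2(r-j),\qquad
\sum_{\ord(b)=j}|S_1(b)|^{2}\le 2(q^{r-j}-q^{r-j-1})\,E_1(r-j),
\]
where $E_i(k):=\sum_{t\in\R/\mathfrak m^{k}}|X_i\cap(t+\mathfrak m^{k})|^{2}$ and $\mathfrak m$ is the maximal ideal; the factor $2$ and the appearance of $E_1$ rather than a "squared" energy reflect the fact that for units $s,s'$ one has $s^{2}\equiv s'^{2}\pmod{\mathfrak m^{k}}$ iff $s\equiv\pm s'\pmod{\mathfrak m^{k}}$ (the only square roots of $1$ modulo $\mathfrak m^{k}$ are $\pm1$, as $q$ is odd). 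Using the crude bound $E_i(k)\le |\mathfrak m^{k}|\,|X_i|=q^{r-k}|X_i|$, observing that $(q^{r-j}-q^{r-j-1})q^{j}=q^{r}-q^{r-1}$, combining with $|G(b)|\le q^{(r+j)/2}$, and summing the geometric series $\sum_{j=0}^{r-1}q^{j/2}$, I expect an estimate of the shape
\[
\big|\,N-|X_1|\,|X_2|\,\big|\ \ll\ q^{r-1}\big(q^{1/2}+1\big)\sqrt{|X_1|\,|X_2|}\ \asymp\ q^{r-\frac12}\sqrt{|X_1|\,|X_2|},
\]
so that $N>0$ as soon as $\sqrt{|X_1|\,|X_2|}$ exceeds a constant multiple of $q^{r-1}(q^{1/2}+1)$, i.e. (using $q^{r}-q^{r-1}=q^{r-1}(q-1)$ and $(q-1)^{2}=(q^{1/2}-1)^{2}(q^{1/2}+1)^{2}$) as soon as $|X_1|\,|X_2|>q^{4r-1}/(q^{r}-q^{r-1})^{2}$, which is exactly the hypothesis.

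The part I expect to be the real work is twofold. First, establishing the Gauss-sum bound $|G(b)|\le q^{(r+\ord(b))/2}$ uniformly over an arbitrary finite valuation ring, rather than just $\mathbb Z/p^{r}\mathbb Z$ or $\mathbb F_{q}[t]/(t^{r})$: the reduction to the residue field needs a little care, particularly when the residue field is not prime. Second, pushing the stratified second-moment computation through with the constants tracked precisely enough to land on the stated threshold $q^{4r-1}/(q^{r}-q^{r-1})^{2}$ rather than a bound of the same order but a constant factor worse — in particular the $s\equiv\pm s'$ degeneracy, and the fact that $x$ ranges over all of $\R$ rather than only $\R^{*}$, must be absorbed without additional loss. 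The remaining steps are routine Fourier analysis on the finite abelian group $(\R,+)$.
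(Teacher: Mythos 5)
Your proposal is correct in substance but takes a genuinely different route from the paper. The paper substitutes $x=x_1/2-z$, $y=x_1/2+z$, reduces the problem to the solvability of $(x_1/2)^2-x_2=z^2$ with $x_1\in X_1$, $x_2\in X_2$, $z\in\R^*$, encodes a solution as an edge between vertex sets built from $A_1=\{(x_1/2)^2\}$, $A_2=\{-x_2\}$, $A_3=A_4=\{z^2\colon z\in\R^*\}$ in the Erd\H{o}s--R\'enyi graph $\mathcal{E}_{q,3}(\R)$, and applies the expander mixing lemma (Lemma \ref{expander}) with Nica's eigenvalue bound $q^{(2r-1)/2}$. You instead count all pairs directly by Fourier analysis on $(\R,+)$, completing the square and re-deriving the required square-root cancellation by hand; your Gauss-sum bound is in fact an exact identity, since
\[
|G(b)|^2=\sum_{h\in\R}\psi(bh^2)\sum_{z\in\R}\psi(2bhz)=q^r\cdot\left\vert\{h\colon bh=0\}\right\vert=q^{r+\ord(b)},
\]
a computation valid over any finite chain ring of odd order, so that part of the "real work" is short. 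Your stratified second-moment bounds are also sound, including the $s\equiv\pm s'$ dichotomy, which holds because $(s-s')+(s+s')=2s$ is a unit, hence at least one factor of $(s-s')(s+s')$ is a unit. What your approach buys is a self-contained argument (the eigenvalue bound you would otherwise import is itself proved by character sums) that yields an asymptotic count of solutions rather than bare existence; what it costs, as you anticipate, is a constant factor of roughly $2$ in the threshold (your computation gives $|X_1||X_2|>2q^{2r-2}(\sqrt q+1)^2$ versus the stated $q^{4r-1}/(q^r-q^{r-1})^2=q^{2r+1}/(q-1)^2$), coming from the two-to-one map $s\mapsto s^2$. You should not be troubled by this: the paper's own argument loses a factor of $8$ against its stated constant for the very same reason (the two-to-one maps behind $|A_1|\ge|X_1|/2$ and $|A_3|,|A_4|\ge|\R^*|/2$), so the displayed threshold must be read up to such constants in either proof, and your argument establishes the theorem in the same strength as the paper's.
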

The rest of this paper is organized as follows: in Section $2$, we mention some tools from spectral graph theory. The proofs of Theorems \ref{muoi}, \ref{mainthm297}, \ref{perper}, and \ref{r1} are given in Sections $3$-$6$.

\section{Tools from spectral graph theory}
We say that a bipartite graph $G=(A\cup B, E)$ is \emph{biregular} if in both of its two parts, all vertices have the same degree. If $A$ is one of the two parts of a bipartite graph, we write $\deg(A)$ for the common degree of the vertices in $A$.
Label the eigenvalues so that $|\lambda_1|\geq |\lambda_2|\geq \cdots \geq |\lambda_n|$.
Note that in a bipartite graph, we have $\lambda_2 = -\lambda_1$. In this paper, we denote the adjacency matrix of $G$ by $M$. The following is the expander mixing lemma for bipartite graphs. The reader can find a detailed proof in \cite{eustis}.
\begin{lemma}\label{expander} Suppose $G$ is a bipartite graph with parts $A,B$ such that the vertices in $A$ all have degree $a$ and the vertices in $B$ all have degree $b$. For any two sets $X\subset A$, and $Y\subset B$, the number of edges between $X$ and $Y$, $e(X,Y)$, satisfies
\[\left\vert e(X,Y)-\frac{a}{|B|}|X||Y|\right\vert\le \lambda_3\sqrt{|X||Y|},\] where $\lambda_3$ is the third eigenvalue of $G$.
\end{lemma}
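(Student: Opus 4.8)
The plan is to reduce the edge count to a bilinear form in indicator vectors and then expand in the singular basis of the biadjacency matrix. Writing the adjacency matrix in block form $M=\begin{pmatrix} 0 & N \\ N^{T} & 0\end{pmatrix}$, where $N$ is the $|A|\times|B|$ biadjacency matrix, the eigenvalues of $M$ are exactly $\pm\sigma_1,\pm\sigma_2,\ldots$, where $\sigma_1\ge\sigma_2\ge\cdots\ge 0$ are the singular values of $N$. Let $\mathbf{1}_X\in\mathbb{R}^{A}$ and $\mathbf{1}_Y\in\mathbb{R}^{B}$ be the indicator vectors of $X$ and $Y$. I would start from the identity $e(X,Y)=\mathbf{1}_X^{T}N\mathbf{1}_Y$ and insert the singular value decomposition $N=\sum_i\sigma_i u_i w_i^{T}$, with $\{u_i\}$ and $\{w_i\}$ orthonormal bases of $\mathbb{R}^{A}$ and $\mathbb{R}^{B}$, to obtain $e(X,Y)=\sum_i\sigma_i(\mathbf{1}_X\cdot u_i)(\mathbf{1}_Y\cdot w_i)$.

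The first step is to isolate the leading term. Because $G$ is biregular, every row sum of $N$ equals $a$ and every column sum equals $b$, so $N$ sends the constant vector on $B$ to $a$ times the constant vector on $A$; this identifies the top singular pair as $u_1=|A|^{-1/2}\mathbf{1}_A$, $w_1=|B|^{-1/2}\mathbf{1}_B$ with $\sigma_1=\sqrt{ab}$, where I use the edge-count identity $a|A|=b|B|$. Substituting this pair gives the $i=1$ contribution $\sqrt{ab}\,|A|^{-1/2}|B|^{-1/2}|X||Y|$, which I would then simplify, once more via $a|A|=b|B|$, to exactly the main term $\frac{a}{|B|}|X||Y|$.

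The second step is to bound the tail $\sum_{i\ge 2}\sigma_i(\mathbf{1}_X\cdot u_i)(\mathbf{1}_Y\cdot w_i)$. Pulling out the largest remaining singular value $\sigma_2$ and applying Cauchy--Schwarz together with Bessel's inequality against the orthonormal systems $\{u_i\}_{i\ge2}$ and $\{w_i\}_{i\ge2}$ yields a bound of $\sigma_2\,\|\mathbf{1}_X\|\,\|\mathbf{1}_Y\|=\sigma_2\sqrt{|X||Y|}$. It then remains to do the spectral bookkeeping: since the graph is bipartite, $\lambda_2=-\lambda_1=-\sigma_1$, so the third-largest eigenvalue in absolute value is precisely $\sigma_2$, giving $|\lambda_3|=\sigma_2$ and hence the claimed inequality.

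The only genuinely delicate point is that the main term $\frac{a}{|B|}|X||Y|$ must emerge exactly from the leading singular value, which is where biregularity is indispensable: for a graph that is not biregular the constant vectors are no longer singular vectors, and the mismatch would produce an error term that Cauchy--Schwarz cannot absorb into $\lambda_3\sqrt{|X||Y|}$. Beyond this, the argument is routine linear algebra, so I anticipate no serious obstacle apart from tracking the normalization constants and the $\lambda_2=-\lambda_1$ convention carefully.
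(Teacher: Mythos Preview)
Your argument is correct and is the standard proof of the bipartite expander mixing lemma: decompose the biadjacency matrix via its SVD, identify the top singular pair with the constant vectors using biregularity (and the double-counting identity $a|A|=b|B|$), and bound the orthogonal remainder by $\sigma_2\sqrt{|X||Y|}$ via Cauchy--Schwarz and Bessel. The only cosmetic point is that the lemma as stated writes $\lambda_3$ rather than $|\lambda_3|$; your identification $|\lambda_3|=\sigma_2$ is what is actually needed.

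There is nothing to compare against in this paper: the authors do not prove the lemma themselves but simply cite Eustis's thesis \cite{eustis} for a detailed proof. Your write-up is exactly the kind of argument one finds in that reference, so in effect you have supplied what the paper outsources.
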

The following theorem is an analogue of \cite[Theorem 9.2.4]{as}.
\begin{theorem}\label{expander2}
Let $G=(A\cup B,E)$ be a bipartite graph as in Lemma \ref{expander}, and $U, V$ two subsets in $A, B$, respectively. Then we have the following estimate
\[\sum_{u\in U}\left(N_V(u)-\frac{a}{|B|}|V|\right)^2\le \lambda_3^2|V|,\] where $N_V(u)=N(u)\cap V$, and $N(u)$ is the set of all neighbors of $u$.
\end{theorem}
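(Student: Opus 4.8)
The plan is to read the left-hand side as the squared length of a deviation vector and bound it spectrally, exactly as in the classical argument behind \cite[Theorem 9.2.4]{as}. Since every summand is nonnegative and the target $\lambda_3^2|V|$ does not involve $U$, I would first replace $U$ by all of $A$, so that it suffices to bound $\sum_{u\in A}(N_V(u)-\frac{a}{|B|}|V|)^2$. Writing the adjacency matrix of $G$ in block form
\[M=\begin{pmatrix}0 & T\\ T^{T} & 0\end{pmatrix},\]
where $T$ is the $|A|\times|B|$ biadjacency matrix (rows indexed by $A$, columns by $B$), and letting $\mathbf{1}_V\in\mathbb{R}^{B}$ be the characteristic vector of $V$ and $\mathbf{1}_A,\mathbf{1}_B$ the all-ones vectors on $A,B$, one has $N_V(u)=|N(u)\cap V|=(T\mathbf{1}_V)_u$ for each $u\in A$. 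Hence the quantity to be bounded equals $\left\|\,T\mathbf{1}_V-\frac{a|V|}{|B|}\mathbf{1}_A\,\right\|^2$.

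Next I would record the spectral structure of $T$. Biregularity gives $T\mathbf{1}_B=a\mathbf{1}_A$ and $T^{T}\mathbf{1}_A=b\mathbf{1}_B$, whence $T^{T}T\mathbf{1}_B=ab\,\mathbf{1}_B$; thus $\sqrt{ab}$ is the largest singular value of $T$, with right singular vector proportional to $\mathbf{1}_B$. Because the nonzero eigenvalues of $M$ are precisely the numbers $\pm\sigma_i$, where $\sigma_1\ge\sigma_2\ge\cdots$ denote the singular values of $T$, ordering by absolute value yields $|\lambda_1|=|\lambda_2|=\sigma_1=\sqrt{ab}$ and $|\lambda_3|=\sigma_2$. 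The variational characterization of singular values then gives $\|T\mathbf{r}\|\le\sigma_2\|\mathbf{r}\|=|\lambda_3|\,\|\mathbf{r}\|$ for every $\mathbf{r}\in\mathbb{R}^{B}$ with $\mathbf{r}\perp\mathbf{1}_B$. I expect this identification $|\lambda_3|=\sigma_2$, together with pinning down $\mathbf{1}_B$ as the leading right singular vector, to be the only genuinely delicate point; everything else is bookkeeping.

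Finally I would split $\mathbf{1}_V$ into its component along $\mathbf{1}_B$ and the orthogonal remainder,
\[\mathbf{1}_V=\frac{|V|}{|B|}\,\mathbf{1}_B+\mathbf{r},\qquad \mathbf{r}\perp\mathbf{1}_B,\]
and apply $T$. Using $T\mathbf{1}_B=a\mathbf{1}_A$, the main term cancels exactly, leaving $T\mathbf{1}_V-\frac{a|V|}{|B|}\mathbf{1}_A=T\mathbf{r}$. The singular value bound of the previous step gives $\|T\mathbf{r}\|^2\le\lambda_3^2\|\mathbf{r}\|^2$, while a direct computation gives $\|\mathbf{r}\|^2=\|\mathbf{1}_V\|^2-|V|^2/|B|=|V|-|V|^2/|B|\le|V|$. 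Combining these inequalities with the reduction from $U$ to $A$ in the first step yields $\sum_{u\in U}(N_V(u)-\frac{a}{|B|}|V|)^2\le\lambda_3^2|V|$, as claimed.
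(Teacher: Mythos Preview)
Your proof is correct and follows essentially the same approach as the paper: both center the characteristic vector of $V$, apply the spectral bound to the centered vector (using orthogonality to the top eigenvectors/singular vector), compute $\|\mathbf{r}\|^2=|V|(1-|V|/|B|)\le|V|$, and then pass from $A$ back to $U$ by monotonicity. The only cosmetic difference is that you work with the biadjacency matrix $T$ and its singular values (identifying $|\lambda_3|=\sigma_2$), whereas the paper embeds the same vector in $\mathbb{R}^{A\cup B}$ and applies $M$ directly, noting that it is orthogonal to the eigenvectors $\sqrt{a}\,\mathbf{1}_A\pm\sqrt{b}\,\mathbf{1}_B$.
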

\begin{proof}
Let denote $c=|V|/|B|$, and $x$ be a vector, where $x_i=I_{i\in V}-c\textbf{1}_B$. We note that $\sqrt{a}\textbf{1}_A\pm \sqrt{b}\textbf{1}_B$ are eigenvectors corresponding to $\lambda_1, \lambda_n$. It follows from the definition of $x$ that $<x,\textbf{1}_A>=0$ and $<x,\textbf{1}_B>=0$. Then $x\in W$, and $||Mx||^2\le \lambda_3^2||x||^2$. We note that \[<Mx,Mx>=\sum_{u\in A}\left(N_V(u)-\frac{a|V|}{|B|}\right)^2,\]
and $||x||^2=(1-c)^2|V|+(|B|-|V|)c^2=(1-c)|V|<|V|$, then the lemma follows from the fact that \[\sum_{u\in U}\left(N_V(u)-\frac{a|V|}{|B|}\right)^2\le \sum_{u\in A}\left(N_V(u)-\frac{a|V|}{|B|}\right)^2.\]
\end{proof}

\subsection{ Product graphs over finite valuation rings}
We define the product graphs $\P_{q,r}(\R)=(A\cup B,E)$ over finite valuation rings $\R$ as follows: $A=B=\R^d\setminus (\R^{0})^d$, and there is an edge between $\mathbf{x}\in A$ and $\mathbf{y}\in B$ if and only if $\mathbf{x}\cdot\mathbf{y}=1$. The spectrum of this graph was given by Nica \cite{bogan}.
\begin{theorem}[\textbf{Nica}, \cite{bogan}]\label{bogan1}
The cardinality of each vertex part of $\P_{q,r}(\R)$ is $q^{dr}-q^{d(r-1)}=(1-o(1))q^{dr}$, and $\deg(A)=\deg(B)=q^{(d-1)r}$. The third eigenvalue of $\P_{q,r}(\R)$ is at most $\sqrt{q^{(d-1)(2r-1)}}$.
\end{theorem}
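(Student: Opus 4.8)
The plan is to prove the three assertions in turn: the vertex count and the regularity by direct counting, and the eigenvalue bound by analysing $NN^{\top}$, where $N$ is the biadjacency matrix of $\P_{q,r}(\R)$. First I record the structure I will use. A finite valuation ring $\R$ of order $q^r$ is a finite chain ring: it is local with maximal ideal $\R^{0}=\mathfrak{m}=(\pi)$, its ideals form the chain $\R\supset(\pi)\supset\cdots\supset(\pi^{r})=0$ with $|(\pi^{k})|=q^{r-k}$, and its residue field is $\mathbb{F}_q$, so $|\R^{*}|=q^{r}-q^{r-1}$. I write $v$ for the associated valuation, and since a finite chain ring is Frobenius I fix a generating additive character $\psi$ (one whose kernel contains no nonzero ideal), so that $\sum_{s\in\R}\psi(sa)=q^{r}[a=0]$. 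A vector lies in $A=\R^{d}\setminus(\R^{0})^{d}$ exactly when it has a unit coordinate, i.e.\ is unimodular, and these are precisely the preimages of $\mathbb{F}_q^{d}\setminus\{0\}$ under reduction mod $\mathfrak{m}$; hence $|A|=q^{dr}-q^{d(r-1)}=(1-o(1))q^{dr}$. If $\mathbf{x}\in A$ has unit coordinate $x_j$, then $\mathbf{x}\cdot\mathbf{y}=1$ determines $y_j$ uniquely once the other coordinates are chosen freely, giving $q^{(d-1)r}$ solutions $\mathbf{y}\in\R^{d}$; each such $\mathbf{y}$ automatically lies in $B$, since $\mathbf{y}\in(\R^{0})^{d}$ would force $\mathbf{x}\cdot\mathbf{y}\in\mathfrak{m}$. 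Thus $\deg(A)=\deg(B)=q^{(d-1)r}$.

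As $\P_{q,r}(\R)$ is biregular bipartite with $|A|=|B|$ and equal degrees, its largest eigenvalue is $\lambda_1=q^{(d-1)r}=-\lambda_2$, and $\lambda_3^{2}$ equals the second-largest eigenvalue $\mu_2$ of $NN^{\top}$. I would compute the entries $c(\mathbf{x},\mathbf{x}')=(NN^{\top})_{\mathbf{x},\mathbf{x}'}=\#\{\mathbf{y}:\mathbf{x}\cdot\mathbf{y}=\mathbf{x}'\cdot\mathbf{y}=1\}$ by orthogonality, obtaining
\[ c(\mathbf{x},\mathbf{x}')=q^{r(d-2)}\!\!\sum_{(s,t)\,:\,s\mathbf{x}+t\mathbf{x}'=0}\!\!\psi(-(s+t)). \]
The submodule $L=\{(s,t):s\mathbf{x}+t\mathbf{x}'=0\}\subseteq\R^{2}$ is carried by the $\R$-linear form $(s,t)\mapsto s+t$ onto an ideal of $\R$; because $\psi$ is generating, the character sum equals $|L|$ if that ideal is $0$ and $0$ otherwise. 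Hence $c(\mathbf{x},\mathbf{x}')$ vanishes unless every relation $s\mathbf{x}+t\mathbf{x}'=0$ forces $s+t=0$, in which case $L=\{(s,-s):s\in\operatorname{Ann}(\mathbf{x}-\mathbf{x}')\}$ and
\[ c(\mathbf{x},\mathbf{x}')=q^{r(d-2)}\,|\operatorname{Ann}(\mathbf{x}-\mathbf{x}')|=q^{\,r(d-2)+v(\mathbf{x}-\mathbf{x}')}. \]
This is the exact ring analogue of the finite-field case, where $c$ takes only the three values $q^{d-1}$ on the diagonal, $q^{d-2}$ on independent pairs, and $0$ on distinct collinear pairs.

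From here I would subtract the generic value and write $NN^{\top}=q^{r(d-2)}J+E$, with $J$ the all-ones matrix and $E_{\mathbf{x},\mathbf{x}'}=c(\mathbf{x},\mathbf{x}')-q^{r(d-2)}$. Since $\mathbf{1}$ is the top eigenvector of $NN^{\top}$, with eigenvalue exactly $q^{2r(d-1)}$, and $J$ vanishes on $\mathbf{1}^{\perp}$, the operator $NN^{\top}$ agrees with $E$ on $\mathbf{1}^{\perp}$, so $\mu_2\le\|E\|_{\mathrm{op}}$. The key point of this centering is that $E$ is supported only on the non-generic pairs: those with $c=0$ and those with $v(\mathbf{x}-\mathbf{x}')=m\ge1$. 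Stratifying the neighbours $\mathbf{x}'$ of a fixed $\mathbf{x}$ by the transverse valuation $m$ and counting, each absolute row sum of $E$ is of order $q^{(d-1)(2r-1)}$: the $c=0$ pairs number $(1+o(1))q^{dr-d+1}$ and each contributes $q^{r(d-2)}$, while the strata $m\ge1$ form a geometric sum dominated by its $m=1$ term, again of size $q^{(d-1)(2r-1)}$, and the single diagonal term $q^{r(d-1)}$ is negligible. Bounding $\|E\|_{\mathrm{op}}$ by the maximal absolute row sum then already yields $\lambda_3\ll\sqrt{q^{(d-1)(2r-1)}}$.

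The hard part will be sharpening this to the stated bound with constant one, namely $\mu_2=q^{(d-1)(2r-1)}$ exactly, because the crude absolute-row-sum estimate discards the cancellation between the negative contributions of the $c=0$ pairs and the positive contributions of the strata $m\ge1$; indeed a naive second-moment (trace) estimate overshoots even more badly, as $NN^{\top}$ has very high multiplicities. I expect the sharp value to require an honest diagonalization of $NN^{\top}$. Since $c(\mathbf{x},\mathbf{x}')$ is constant on the orbits of the diagonal $\mathrm{GL}_d(\R)$-action on unimodular pairs, $NN^{\top}$ lies in the Bose--Mesner algebra of the resulting commutative association scheme, and its spectrum can be read off from that scheme. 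Concretely, the finite-field eigenspace decomposition into the constants, the line-wise constant functions, and the line-wise balanced functions must be replaced by a length-$r$ tower indexed by the $\pi$-adic filtration $\R^{d}\supset\pi\R^{d}\supset\cdots$; carrying this out and verifying that the second-largest eigenvalue is precisely $q^{(d-1)(2r-1)}$ is the technical heart, after which $\lambda_3=\sqrt{\mu_2}\le\sqrt{q^{(d-1)(2r-1)}}$ gives the claim.
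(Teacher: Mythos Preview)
The paper does not prove this theorem at all: it is stated with attribution to Nica \cite{bogan} and used as a black box, so there is no ``paper's own proof'' to compare your proposal against. What you have written is therefore supplying content the paper deliberately omits.

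On the substance of your proposal: your counting arguments for the vertex-set size and the regularity are correct and complete. Your approach to the eigenvalue bound via $NN^{\top}$ and character sums is the standard one, and is in fact the route taken in Nica's original paper; the formula $c(\mathbf{x},\mathbf{x}')=q^{r(d-2)}\sum_{(s,t)\in L}\psi(-(s+t))$ and the subsequent analysis of $L$ are right. Your row-sum estimate legitimately gives $\lambda_3\ll\sqrt{q^{(d-1)(2r-1)}}$, and you are honest that pinning down the constant $1$ needs the full spectral decomposition; Nica carries this out explicitly in \cite{bogan} via a recursive argument on the chain of ideals, which is exactly the ``$\pi$-adic tower'' you anticipate. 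So your plan is sound and aligned with the original source, but it is a plan rather than a finished proof of the sharp bound; for the purposes of \emph{this} paper, which only quotes the result, nothing more is needed.
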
  
\subsection{Erd\H{o}s-R\'enyi graphs over finite valuation rings}
For any $\tmmathbf{x} \in \R^d \backslash (\R^{0})^d$, we denote $[\tmmathbf{x}]$  the equivalence class of $\tmmathbf{x}$ in $\R^d \backslash (\R^{0})^d$, where $\tmmathbf{x}, \tmmathbf{y} \in \R^d \backslash (\R^{0})^d$ are equivalent if and only if $\tmmathbf{x} = t \tmmathbf{y}$ for some $t \in \R^{*}$. Let $\E_{q,d}(\R)$ denote the Erd\H{o}s-R\'enny bipartite graph $\E_{q,d}(\R)=(A\cup B,E)$ whose vertices in each part are the points of the projective space over $\R$, where two vertices $[\tmmathbf{x}]$ and $[\tmmathbf{y}]$ are connected if and only if $\tmmathbf{x} \cdot \tmmathbf{y}= 0$. We have the following theorem on the spectrum of $\E_{q,d}(\R)$.
\begin{theorem}[\textbf{Nica}, \cite{bogan}\label{bogan2}] The cardinality of each vertex part of $\E_{q,d}(\R)$ is $ q^{(d-1)(r-1)}(q^{d}-1)/(q-1)$, and $\deg(A)=\deg(B)=q^{(d-2)(r-1)}(q^{d-1}-1)/(q-1)$. The third eigenvalue of $\E_{q,d}(\R)$ is at most $\sqrt{q^{(d-2)(2r-1)}}$.
\end{theorem}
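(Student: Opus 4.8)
The statement bundles three assertions: the vertex count, the common degree (hence biregularity), and the spectral gap. The plan is to clear the two counting claims by elementary arguments over the local ring and then concentrate on the eigenvalue bound, which is the real content.

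First I would recall the ring structure: $\R$ is local with maximal ideal $\R^{0}=(\pi)$ for a uniformizer $\pi$, residue field $\R/\R^{0}\cong\FF$, with $|\R^{0}|=q^{r-1}$ and $|\R^{*}|=q^{r-1}(q-1)$. For the vertex count, the set $\R^d\setminus(\R^{0})^d$ of vectors having at least one unit coordinate has size $q^{dr}-q^{d(r-1)}$, and each projective class $[\mathbf{x}]$ is the $\R^{*}$-orbit of such a vector of size exactly $|\R^{*}|$ (a unit coordinate makes the scaling action free); dividing gives $(q^{dr}-q^{d(r-1)})/(q^{r-1}(q-1))=q^{(d-1)(r-1)}(q^{d}-1)/(q-1)$. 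For the degree, fix $[\mathbf{x}]$ and assume $x_1\in\R^{*}$. The equation $\mathbf{x}\cdot\mathbf{y}=0$ solves uniquely for $y_1$ in terms of the free coordinates $y_2,\dots,y_d\in\R$, giving $q^{(d-1)r}$ vectors $\mathbf{y}\in\R^d$ orthogonal to $\mathbf{x}$; exactly $q^{(d-1)(r-1)}$ of them lie in $(\R^{0})^d$ (once $y_2,\dots,y_d\in\R^{0}$, the formula forces $y_1\in\R^{0}$). The orthogonal set in $\R^d\setminus(\R^{0})^d$ is $\R^{*}$-invariant, so dividing its size $q^{(d-1)(r-1)}(q^{d-1}-1)$ by $|\R^{*}|$ yields $q^{(d-2)(r-1)}(q^{d-1}-1)/(q-1)$; as this is independent of $[\mathbf{x}]$, the graph is biregular.

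Now the spectral bound. Since orthogonality is symmetric, the biadjacency matrix $N$ (indexed by projective points on both sides) is symmetric, so the eigenvalues of the bipartite adjacency matrix $M$ are $\pm\mu_i$ with $\mu_i$ the eigenvalues of $N$; hence $\lambda_3$ equals the second-largest $|\mu_i|$, and it suffices to bound the second-largest eigenvalue of $N N^{\top}=N^2$ by $q^{(d-2)(2r-1)}$. I would lift functions on projective points to $\R^{*}$-invariant functions on $\R^d\setminus(\R^{0})^d$ and expand the orthogonality indicator through a nontrivial additive character $\psi$ of $\R$, via $\mathbf{1}[\mathbf{x}\cdot\mathbf{y}=0]=q^{-r}\sum_{t\in\R}\psi(t\,\mathbf{x}\cdot\mathbf{y})$. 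Substituted into the entries of $N^2$ (a count of common orthogonal projective points), the $\mathbf{y}$-sum collapses into terms of the form $\#\{(s,t):s\mathbf{x}+t\mathbf{z}=0\}$, so the off-diagonal behaviour of $N^2$ is governed by how far two representatives $\mathbf{x},\mathbf{z}$ are from being proportional modulo the filtration $\R^{0}\supset(\R^{0})^2\supset\cdots\supset(\R^{0})^r=0$. Organizing pairs of projective points by this $\pi$-adic type exhibits $N$ inside a commutative (association-scheme) algebra, and the non-principal eigenvalues are read off from the resulting small matrix of intersection numbers. Heuristically the exponent $2r-1=r+(r-1)$ records one full unit-direction character sum (size $q^{r}$) together with one non-unit stratum (size $q^{r-1}$) per transverse direction, and raising to the $(d-2)$ transverse directions produces $q^{(d-2)(2r-1)}$.

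The hard part is exactly this last step: over a field the orthogonality operator diagonalizes cleanly by Fourier analysis, but over $\R$ the twin operations of deleting the all-non-unit vectors $(\R^{0})^d$ and then projectivizing \emph{amplify} certain eigenvalues—indeed the naive full-affine scale $q^{(d-2)r/2}$ is strictly smaller than the target $q^{(d-2)(2r-1)/2}$ for $r>1$—so one must track the contribution of each layer $(\R^{0})^i$ and show every non-principal eigenvalue is at most $\sqrt{q^{(d-2)(2r-1)}}$. A convenient cross-check and alternative route is to compare with the product graph $\P_{q,r}(\R)$ of Theorem \ref{bogan1}: dehomogenizing a projective point with unit last coordinate to $(\mathbf{x}',1)$ turns $\mathbf{x}\cdot\mathbf{y}=0$ into the product-graph relation $\mathbf{x}'\cdot\mathbf{y}'=-1$ in dimension $d-1$, whose third-eigenvalue bound $\sqrt{q^{(d-2)(2r-1)}}$ matches the target exactly. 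The correspondence is not a literal graph isomorphism—dehomogenization admits all of $\R^{d-1}$ whereas the product graph excludes $(\R^{0})^{d-1}$, and the points with non-unit last coordinate form additional strata—so the remaining work is to account for these strata and transfer the bound of Theorem \ref{bogan1} to the full projective point set, recovering $\lambda_3\le\sqrt{q^{(d-2)(2r-1)}}$.
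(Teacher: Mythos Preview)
The paper does not prove this theorem at all: it is quoted verbatim from Nica \cite{bogan} and used as a black box. So there is no ``paper's own proof'' to compare against, and your task reduces to whether your proposal actually establishes the three claims.

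Your vertex and degree counts are correct and complete. The spectral part, however, is not a proof but a plan. You write ``I would lift functions\dots'', ``Organizing pairs\dots exhibits $N$ inside a commutative (association-scheme) algebra, and the non-principal eigenvalues are read off from the resulting small matrix of intersection numbers'', and then close with ``the remaining work is to account for these strata and transfer the bound of Theorem \ref{bogan1}''. None of these steps is carried out: you neither write down the association scheme, nor compute its intersection numbers, nor identify its eigenspaces, nor show that every non-principal eigenvalue is bounded by $q^{(d-2)(2r-1)}$. The heuristic paragraph about ``one full unit-direction character sum together with one non-unit stratum per transverse direction'' is suggestive but is not an argument. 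Likewise, the dehomogenization comparison with $\P_{q,r}(\R)$ is, as you yourself say, not a graph isomorphism, and you do not explain how the extra strata (points with non-unit last coordinate, and the discrepancy between $\R^{d-1}$ and $\R^{d-1}\setminus(\R^{0})^{d-1}$) are controlled; a perturbation of the vertex set can in principle move eigenvalues in either direction, so this transfer needs a concrete mechanism (e.g.\ interlacing, or an explicit block decomposition), not just an observation that the target exponents coincide.

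In short: the two counting assertions are proved; the eigenvalue bound is not. If you want a self-contained argument, the cleanest route is the one Nica takes in \cite{bogan}---compute $N^2$ directly and stratify by the $\pi$-adic valuation of the $2\times 2$ minors of $(\mathbf{x},\mathbf{z})$---rather than attempting to import the product-graph bound across a non-isomorphism.
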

As an application of the Erd\H{o}s-R\'{e}nyi graph $\E_{q,2d}(\R)$, we obtain the following theorem which is a generalization of \cite[Theorem 9]{area}. Some of its applications over finite fields can be found in \cite{hartetal, area}.
\begin{theorem}\label{iosquare}
Let $F$ and $G$ be subsets in $\R^d$. Suppose that $F\cap (\R^0)^d=\emptyset$. Let, for $t\in \R$, 
\[\nu(t):=\left\vert \left\lbrace\right (\mathbf{x},\mathbf{y})\in F\times G\colon \mathbf{x}\cdot \mathbf{y}=t\rbrace\right\vert,\]
where $\mathbf{x}\cdot \mathbf{y}=x_1y_1+\cdots+x_dy_d$.
Then 
\[\sum_{t\in \R}\nu(t)^2\le \frac{|F|^2|G|^2}{q^r}+q^{(d-1)(2r-1)}|F||G|\cdot\max_{\mathbf{x}\in \R^d\setminus (\R^0)^d}|F\cap l_\mathbf{x}|,\]
where
\[l_\mathbf{x}:=\left\lbrace s\mathbf{x}\colon s\in \R^*\right\rbrace,\]
with $\mathbf{x}\in \R^d\setminus (\R^0)^d.$
\end{theorem}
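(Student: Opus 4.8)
The plan is to interpret the sum $\sum_{t\in\R}\nu(t)^2$ as a count of certain configurations and to split off the contribution of the non-units $t\in\R^0$ from that of the units $t\in\R^*$; the latter will be handled by the Erd\H{o}s--R\'enyi graph $\E_{q,2d}(\R)$ via the expander mixing lemma. First I would observe that $\sum_t\nu(t)^2$ counts the number of quadruples $(\mathbf{x},\mathbf{x}',\mathbf{y},\mathbf{y}')\in F^2\times G^2$ with $\mathbf{x}\cdot\mathbf{y}=\mathbf{x}'\cdot\mathbf{y}'$, equivalently the number of solutions in $F\times F\times G\times G$ to $\mathbf{x}\cdot\mathbf{y}-\mathbf{x}'\cdot\mathbf{y}'=0$. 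Writing this as $(\mathbf{x},-\mathbf{x}')\cdot(\mathbf{y},\mathbf{y}')=0$ in $\R^{2d}$, the condition becomes an orthogonality relation between the point $(\mathbf{x},-\mathbf{x}')\in\R^d\times\R^d$ and the point $(\mathbf{y},\mathbf{y}')$, which is exactly an edge of $\E_{q,2d}(\R)$ — provided both points actually lie outside $(\R^0)^{2d}$, which is where the hypothesis $F\cap(\R^0)^d=\emptyset$ enters: it guarantees $(\mathbf{x},-\mathbf{x}')\notin(\R^0)^{2d}$.

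Next I would set up the bookkeeping. For each $\mathbf{x}\in F$ let $U$ be the multiset (or rather a suitable weighted set) of equivalence classes $[(\mathbf{x},-\mathbf{x}')]$ as $(\mathbf{x},\mathbf{x}')$ ranges over $F\times F$, and similarly $V$ from $G\times G$; an edge of $\E_{q,2d}(\R)$ between $[(\mathbf{x},-\mathbf{x}')]$ and $[(\mathbf{y},\mathbf{y}')]$ records precisely a solution of $\mathbf{x}\cdot\mathbf{y}=\mathbf{x}'\cdot\mathbf{y}'$ with both sides a \emph{unit}, while the degenerate solutions where $\mathbf{x}\cdot\mathbf{y}=\mathbf{x}'\cdot\mathbf{y}'\in\R^0$ must be counted separately. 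Applying Lemma \ref{expander} to $\E_{q,2d}(\R)$ with the parameters from Theorem \ref{bogan2} (degree $\sim q^{(2d-2)(r-1)}q^{2d-2}/(q-1)$, part size $\sim q^{(2d-1)(r-1)}q^{2d}/(q-1)$, third eigenvalue $\le q^{(d-1)(2r-1)/2}$), the main term $\frac{a}{|B|}|X||Y|$ simplifies to roughly $\frac{|F|^2|G|^2}{q^r}$ after one checks that $a/|B|=(q^{d-1}-1)/\big(q^{d}-1\big)\cdot q^{-(r-1)}\sim q^{-r}$, and the error term is $\le q^{(d-1)(2r-1)/2}\sqrt{|X||Y|}$ per point — summed over the $\mathbf{x}$-fibers this yields the claimed $q^{(d-1)(2r-1)}|F||G|\max_{\mathbf{x}}|F\cap l_{\mathbf{x}}|$, the factor $|F\cap l_{\mathbf{x}}|$ arising because collapsing $(\mathbf{x},-\mathbf{x}')$ to its equivalence class merges the $\le\max|F\cap l_{\mathbf{x}}|$ many points of $F$ on each line through the origin.

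I expect the main obstacle to be the careful treatment of the degenerate solutions, i.e. the pairs $(\mathbf{x},\mathbf{y},\mathbf{x}',\mathbf{y}')$ with $\mathbf{x}\cdot\mathbf{y}=\mathbf{x}'\cdot\mathbf{y}'$ a non-unit, together with the passage from actual points in $\R^{2d}\setminus(\R^0)^{2d}$ to projective classes. One must verify that the non-unit contribution $\sum_{t\in\R^0}\nu(t)^2$ is dominated by the error term already present (this should follow from a crude bound: the number of $(\mathbf{x},\mathbf{y})\in F\times G$ with $\mathbf{x}\cdot\mathbf{y}\in\R^0$ is at most $|F||G|q^{-1}\cdot(\text{something})$, absorbed into $q^{(d-1)(2r-1)}|F||G|$ since $|F\cap l_{\mathbf{x}}|\ge 1$), and that the identification of edges with unit-valued dot products is exact up to the line-multiplicity factor. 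Once these two points are pinned down, the inequality drops out by combining the expander estimate with the trivial bound on the degenerate part, and one arrives at
\[\sum_{t\in\R}\nu(t)^2\le \frac{|F|^2|G|^2}{q^r}+q^{(d-1)(2r-1)}|F||G|\cdot\max_{\mathbf{x}\in\R^d\setminus(\R^0)^d}|F\cap l_{\mathbf{x}}|,\]
as desired.
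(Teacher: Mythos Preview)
Your overall strategy---lift the quadruple count to an orthogonality condition in $\R^{2d}$ and invoke the Erd\H{o}s--R\'enyi graph $\E_{q,2d}(\R)$---is the same as the paper's, but two points in your proposal do not work as written.

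First, your pairing $(\mathbf{x},-\mathbf{x}')\in F\times F$ and $(\mathbf{y},\mathbf{y}')\in G\times G$ puts both $F$-factors on one side and both $G$-factors on the other. The hypothesis $F\cap(\R^0)^d=\emptyset$ then guarantees $(\mathbf{x},-\mathbf{x}')\notin(\R^0)^{2d}$, as you note, but it says nothing about $(\mathbf{y},\mathbf{y}')$: since no assumption is made on $G$, the $V$-side may contain points of $(\R^0)^{2d}$ and hence not correspond to vertices of $\E_{q,2d}(\R)$. Moreover, even ignoring this, the multiplicity of a projective class on the $V$-side would be controlled by $\max_{\mathbf{y}}|G\cap l_{\mathbf{y}}|$, not by $\max_{\mathbf{x}}|F\cap l_{\mathbf{x}}|$, so your bookkeeping cannot produce the stated bound. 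The paper sidesteps both issues with a \emph{cross}-pairing: it sets $U=\{(\mathbf{x},-\mathbf{t}):\mathbf{x}\in F,\ \mathbf{t}\in G\}$ and $V=\{(\mathbf{y},\mathbf{z}):\mathbf{y}\in G,\ \mathbf{z}\in F\}$, so that each of $U,V$ carries an $F$-coordinate. This simultaneously forces $U,V\subset\R^{2d}\setminus(\R^0)^{2d}$ and ensures that the multiplicity of any projective class in either set is at most $m=\max_{\mathbf{x}}|F\cap l_{\mathbf{x}}|$; one then partitions each of $U,V$ into $m$ pieces of distinct classes and applies Lemma~\ref{expander} pairwise with Cauchy--Schwarz.

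Second, your unit/non-unit split is based on a misreading of the Erd\H{o}s--R\'enyi edge relation. An edge of $\E_{q,2d}(\R)$ between $[(\mathbf{x},-\mathbf{t})]$ and $[(\mathbf{y},\mathbf{z})]$ means $(\mathbf{x},-\mathbf{t})\cdot(\mathbf{y},\mathbf{z})=0$, i.e.\ $\mathbf{x}\cdot\mathbf{y}=\mathbf{z}\cdot\mathbf{t}$, with \emph{no} restriction on the common value. So the edge count (with multiplicities) already upper-bounds $\sum_{t\in\R}\nu(t)^2$ over all $t$; there is nothing degenerate to treat separately, and the extra analysis you anticipate for $t\in\R^0$ is unnecessary.
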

\begin{proof}For any pair of points $(\mathbf{a}, \mathbf{b})\in \R^d\times \R^d$, we define
\[p_{\mathbf{a}, \mathbf{b}}:=(a_1,\ldots,a_d, b_1,\ldots, b_d),\]
and 
\[U:=\left\lbrace p_{\mathbf{x}, -\mathbf{t}}\colon (\mathbf{x}, \mathbf{t}) \in F\times G\right\rbrace\subseteq \R^{2d}, V:=\left\lbrace p_{\mathbf{y}, \mathbf{z}}\colon (\mathbf{y}, \mathbf{z})\in G\times F\right\rbrace
\subseteq \R^{2d}.\]
Since $F\cap (\R^0)^d= \emptyset$,  $U$ and $V$ are sets of points in $\R^{2d}\setminus (\R^0)^{2d}$. It follows from the definition of $\nu(t)$ that $\sum_{t\in \R}\nu(t)^2$ is the number of quadruples $(\mathbf{x}, \mathbf{y}, \mathbf{z}, \mathbf{t})\in F\times G\times F\times G$ satisfying $\mathbf{x}\cdot\mathbf{y}=\mathbf{z}\cdot \mathbf{t}$.   It is clear that  if $\mathbf{x}\cdot\mathbf{y}=\mathbf{z}\cdot\mathbf{t}$, then there is an edge between two vertices $[p_{\mathbf{x}, -\mathbf{t}}]$ and $[p_{\mathbf{y}, \mathbf{z}}]$ in the Erd\H{o}s-R\'{e}nyi graph $\mathcal{E}_{q, 2d}(\R)$. However, we can not make sure that the number of edges between $[U]:=\{[u]\colon u\in U\}$ and $[V]:=\{[v]\colon v\in V\}$ in 
the Erd\H{o}s-R\'{e}nyi graph $\mathcal{E}_{q, 2d}(\R)$ is  an upper bound for the sum $\sum_{t\in \R}\nu(t)^2$, since there might exist two points in $U$ determining the same congruence class, i.e. the same vertex in the Erd\H{o}s-R\'{e}nyi graph $\mathcal{E}_{q, 2d}(\R)$, for example $u\in U$ and $\lambda u\in U$ with $\lambda\in \R^*\setminus \{1\}$.

Thus we will partition  $U$ and $V$ to subsets such that no two points in each subset determine the same vertex in the Erd\H{os}-R\'{e}nyi graph.

Since $m=\max_{\mathbf{x}\in \R^d\setminus (\R^0)^d}|F\cap l_{\mathbf{x}}|$, we can partition $U$ into $m$ subsets $U_1,\ldots, U_m$  of distinct vertices of the Erd\H{os}-R\'{e}nyi graph $\mathcal{ER}_{q, 2d}(\R)$. Similarly, we also can partition $V$ into $m$ subsets $V_1,\ldots, V_m$ of distinct vertices of the Erd\H{os}-R\'{e}nyi graph $\mathcal{ER}_{q, 2d}(\R)$. Then, it is clear that
\[\sum_{t\in \R}\nu(t)^2\le \sum_{i,j}e(U_i,V_j)=\sum_{j=1}^m e(U_1, V_j)+\cdots+\sum_{j=1}^m e(U_m, V_j).\]

On the other hand, for each $1\le i\le m$, it follows from Lemma \ref{expander} and Theorem \ref{bogan2} that 
\begin{eqnarray*}
\sum_{j=1}^m e(U_i, V_j)&\le& \frac{|U_i||V|}{q^r}+q^{(d-1)(2r-1)}\sqrt{|U_i}(\sqrt{|V_1|}+\cdots+\sqrt{|V_m|})\\&\le& \frac{|U_i||V|}{q^r}+\sqrt{m}q^{(d-1)(2r-1)}\sqrt{|U_i|V|},
\end{eqnarray*}
where the second inequality follows from the Cauchy-Schwarz inequality. Thus
\[\sum_{t\in \R}\nu(t)^2\le \frac{|U||V|}{q^r}+q^{(d-1)(2r-1)}m\sqrt{|U||V|}.\]
From the definitions of $U$ and $V$, we get $|U|=|F||G|$ and $|V|=|F||G|$. Therefore the theorem follows.
\end{proof}
Now we prove the following theorem that will be used many times in this paper.
\begin{theorem}\label{manytimes}
Let $F$ and $G$ be subsets in $\R^2$. Suppose that $m=\max_{\mathbf{x}\in \R^2\setminus (\R^0)^2}|F\cap l_\mathbf{x}|$, then we have
\[\left\vert \left\lbrace \mathbf{x}\cdot \mathbf{y}\colon \mathbf{x}\in F, \mathbf{y}\in G\right\rbrace\right\vert =(1-o(1))q^r,\]
when $mq^{(d-1)(2r-1)+r}=o(|F||G|)$.
\end{theorem}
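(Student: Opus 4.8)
The plan is to deduce Theorem~\ref{manytimes} from Theorem~\ref{iosquare} by the standard Cauchy--Schwarz (second moment) argument, exactly as in the finite field case. For $t\in\R$ write $\nu(t)$ for the number of pairs $(\mathbf{x},\mathbf{y})\in F\times G$ with $\mathbf{x}\cdot\mathbf{y}=t$, and set $T:=\{t\in\R:\nu(t)>0\}$, so that $T$ is precisely the dot-product set whose size we must estimate. As in Theorem~\ref{iosquare} we work under the assumption $F\cap(\R^0)^d=\emptyset$: if $\mathbf{x}\in(\R^0)^d$ then every coordinate of $\mathbf{x}$ lies in the maximal ideal, so $\mathbf{x}\cdot\mathbf{y}\in\R^0$ for every $\mathbf{y}$, and such points only contribute dot products in $\R^0$, a set of size $q^{r-1}=o(q^r)$, which is irrelevant to a lower bound on $|T|$; moreover deleting these points does not change the parameter $m$, because each line $l_{\mathbf{x}}$ with $\mathbf{x}\notin(\R^0)^d$ is itself disjoint from $(\R^0)^d$ (a unit coordinate of $\mathbf{x}$ stays a unit after multiplication by any $s\in\R^*$).

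With that in place I would argue as follows. Since $\sum_{t\in T}\nu(t)=|F||G|$, the Cauchy--Schwarz inequality gives $|F|^2|G|^2\le |T|\sum_{t\in\R}\nu(t)^2$. Theorem~\ref{iosquare} bounds the last sum by $|F|^2|G|^2/q^r+q^{(d-1)(2r-1)}m\,|F||G|$, where $m=\max_{\mathbf{x}\in\R^d\setminus(\R^0)^d}|F\cap l_{\mathbf{x}}|$. Dividing numerator and denominator by $|F|^2|G|^2$ we obtain
$$|T|\ \ge\ \frac{q^r}{1+q^{(d-1)(2r-1)+r}m/(|F||G|)}.$$
Under the hypothesis $m\,q^{(d-1)(2r-1)+r}=o(|F||G|)$ the denominator is $1+o(1)$, hence $|T|\ge(1-o(1))q^r$; together with the trivial bound $|T|\le q^r$ this gives $|T|=(1-o(1))q^r$, which is the assertion.

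I do not anticipate a genuine obstacle here: the analytic core — the spectral gap of the Erd\H{o}s--R\'enyi graph $\E_{q,2d}(\R)$ and the resulting $L^2$ estimate — is already packaged in Theorem~\ref{iosquare}, and what is left is the elementary dualization above. The only points needing a little care are the two bookkeeping remarks already made, namely that the degenerate points $F\cap(\R^0)^d$ may be discarded without affecting the lower bound, and that this discard leaves $m$ unchanged; these are precisely the facts that make it legitimate to invoke Theorem~\ref{iosquare} (whose hypothesis $F\cap(\R^0)^d=\emptyset$ is thereby inherited) in the generality stated here.
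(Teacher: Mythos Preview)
Your argument is correct and essentially identical to the paper's: both apply Cauchy--Schwarz to $\sum_t\nu(t)=|F||G|$ and bound the second moment $\sum_t\nu(t)^2$ via Theorem~\ref{iosquare}, then conclude from the hypothesis. You additionally justify the assumption $F\cap(\R^0)^d=\emptyset$ needed to invoke Theorem~\ref{iosquare}, a point the paper's proof passes over in silence.
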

\begin{proof}
We first have
\[\left\vert \left\lbrace\mathbf{x}\cdot \mathbf{y}\colon \mathbf{x}\in F, \mathbf{y}\in G\right\rbrace\right\vert=\left\vert\{t\colon \nu(t)>0\}\right\vert,\] and 
\[\sum_{t\in \R}\nu(t)=|F||G|.\]
On the other hand, let
\[T=\left\vert\{(\mathbf{x}_1,\mathbf{y}_1,\mathbf{x}_2,\mathbf{y}_2)\in  F\times G\times F\times G\colon \mathbf{x}_1\cdot \mathbf{y}_1=\mathbf{x}_2\cdot\mathbf{y}_2\}\right\vert,\]
which implies that 
\[T=\sum_{t\in \R}\nu(t)^2.\]
It follows from the Cauchy-Schwarz inequality that 
\[\left\vert \{t\colon \nu(t)>0\}\right\vert \sum_{t\in \R}\nu(t)^2\ge \left(\sum_{t\in \R}\nu(t)\right)^2.\]
Therefore, we obtain
\begin{equation}\label{ioseq1}\left\vert \{t\colon \nu(t)>0\}\right\vert\ge \frac{|F|^{2}|G|^2}{T}.\end{equation}
On the other hand, it follows from Theorem \ref{iosquare} that
 \begin{equation}\label{ioseq2}T\le \frac{|F|^{2}|G|^2}{q^r}+mq^{(d-1)(2r-1)}|F||G|.\end{equation}
 Putting (\ref{ioseq1}) and (\ref{ioseq2}) together gives us
 \[\left\vert \left\lbrace\mathbf{x}\cdot \mathbf{y}\colon \mathbf{x} \in F, \mathbf{y}\in G\right\rbrace\right\vert
=(1-o(1))q^r,\] 
when $mq^{(d-1)(2r-1)+r}=o(|F||G|)$. This concludes the proof of the theorem.
\end{proof}

\section{Proof of Theorem \ref{muoi}}
A graph $G
= (V, E)$ is called an $(n, d, \lambda)$-graph if it is $d$-regular, has $n$
vertices, and the second eigenvalue of $G$ is at most $\lambda$. Suppose that a graph $G$ is edge-colored by a set of finite colors. We say that $G$ is an $(n,d,\lambda)$-colored graph if the induced subgraph of $G$ on each color is an $(n,d(1+o(1)), \lambda)$-graph. In \cite{vinh-spectral}, the second listed author proved that any large induced subgraph of an $(n,d,\lambda)$-colored graph contains almost all possible colorings of small complete subgraphs.
\begin{theorem}(\cite[Theorem 2.7]{vinh-spectral})\label{vinh-norm-norm}
For any $t\ge 2$. Let $G=(V,E)$ be an $(n,d,\lambda)$-colored graph, and let $m<n$ such that $m\gg \lambda(n/d)^{t/2}$. Suppose that the color set of $\C$ has cardinality $|\C|=(1-o(1))n/d$, then for every subset $U\subset V$ with cardinality $m$, the induced subgraph $G$ on $U$ contains at least $(1-o(1))|\C|^{\binom{t}{2}}$ possible colorings of $K_t$.
\end{theorem}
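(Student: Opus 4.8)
The plan is to fix a target coloring, count its realizations, and then pass from counts to mere existence of realizations by a second-moment argument. For a coloring $\chi$ of the edge set of $K_t$ by colors of $\C$, write $N_\chi$ for the number of ordered $t$-tuples of distinct vertices $(v_1,\dots,v_t)$ in $U$ such that the edge $\{v_i,v_j\}$ of $G$ has color $\chi(\{i,j\})$ for all $i<j$. Every ordered $t$-tuple of distinct, pairwise-adjacent vertices of $U$ induces exactly one coloring, so $\sum_\chi N_\chi$ equals the number of such tuples. Since each color class is $d(1+o(1))$-regular and $|\C|=(1-o(1))n/d$, every vertex is adjacent to $(1-o(1))n$ others, so the tuples that fail to be pairwise adjacent are negligible and $\sum_\chi N_\chi=(1-o(1))m^t$. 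The theorem then reduces to showing $N_\chi>0$ for almost all $\chi$, which I will obtain from a uniform upper bound on $N_\chi$.

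Concretely, the key estimate I aim for is $N_\chi\le (1+o(1))(d/n)^{\binom t2}m^t$ for \emph{every} coloring $\chi$. Granting this, Cauchy--Schwarz gives
\[\#\{\chi:N_\chi>0\}\ge \frac{\left(\sum_\chi N_\chi\right)^2}{\sum_\chi N_\chi^2}\ge \frac{\left(\sum_\chi N_\chi\right)^2}{(\max_\chi N_\chi)\sum_\chi N_\chi}=\frac{\sum_\chi N_\chi}{\max_\chi N_\chi},\]
and substituting $\sum_\chi N_\chi=(1-o(1))m^t$, $\max_\chi N_\chi\le(1+o(1))(d/n)^{\binom t2}m^t$, together with $|\C|=(1-o(1))n/d$, yields $\#\{\chi:N_\chi>0\}\ge(1-o(1))(n/d)^{\binom t2}=(1-o(1))|\C|^{\binom t2}$, which is exactly the assertion. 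Thus the whole theorem is reduced to the uniform upper bound.

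I would prove the upper bound by induction on $t$, peeling off one vertex at a time. For $t=2$ it is Lemma \ref{expander} applied to the single color class $\chi(\{1,2\})$ with $X=Y=U$: the number of monochromatic ordered pairs in $U$ is at most $(d/n)m^2+\lambda m=(1+o(1))(d/n)m^2$, the error being lower order precisely because $m\gg\lambda(n/d)$. For the inductive step, write $\chi'$ for the restriction of $\chi$ to $K_{t-1}$ on $\{1,\dots,t-1\}$ and, for a tuple $\vec u=(u_1,\dots,u_{t-1})$ realizing $\chi'$, let $f(\vec u)$ be the number of $w\in U$ with $\{u_i,w\}$ of color $\chi(\{i,t\})$ for all $i$, so that $N_\chi=\sum_{\vec u}f(\vec u)$. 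I would split this sum according to whether $f(\vec u)$ is close to its expected value $(d/n)^{t-1}m$. The ``good'' tuples contribute at most $(1+o(1))(d/n)^{t-1}m$ times the number of realizers of $\chi'$, which by the induction hypothesis is $(1+o(1))(d/n)^{\binom{t-1}2}m^{t-1}$; since $\binom{t-1}2+(t-1)=\binom t2$, this is exactly the target main term. The contribution of the ``bad'' tuples, those with anomalously large common color-neighborhood, is where Theorem \ref{expander2} enters: applied one color class at a time it bounds, in mean square, the deviation of a single neighborhood from $(d/n)|U|$, and summing these $L^2$ deviations over the peeling controls the number of bad tuples and forces their contribution to be $o((d/n)^{\binom t2}m^t)$.

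The main obstacle is exactly this last point: the quantity $f(\vec u)$ is an intersection of $t-1$ neighborhoods taken in $t-1$ \emph{different} color classes, whereas Theorem \ref{expander2} controls only one class at a time. The difficulty is therefore to decouple these color graphs, which I expect to handle by estimating $\sum_{\vec u}f(\vec u)^2$ through codegrees and feeding the per-color mean-square bounds of Theorem \ref{expander2} into the peeling inductively. Because that bound is quadratic (an $L^2$, not $L^\infty$, estimate), the accumulated error after processing all $t$ vertices scales like a power of $\lambda$ divided by a power of $m$, and demanding that it remain $o(1)$ is precisely what produces the threshold $m\gg\lambda(n/d)^{t/2}$; matching this exponent $t/2$ across the induction, rather than losing a factor of $d$ in place of $\lambda$, is the delicate part of the argument.
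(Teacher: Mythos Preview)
The paper does not actually prove this theorem. Theorem~\ref{vinh-norm-norm} is quoted from \cite[Theorem~2.7]{vinh-spectral} and is used as a black box; the paper only remarks that its proof transfers, via Lemma~\ref{expander} and Theorem~\ref{expander2}, to the graph $G(\R)$ to yield Theorem~\ref{thm:main9}. There is therefore no in-paper argument to compare your proposal against.

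That said, your outline has the right shape and invokes exactly the two spectral tools the paper names as the ingredients: the expander mixing lemma for the first-moment count, and the $L^2$ neighbourhood-deviation bound (Theorem~\ref{expander2}) to control anomalous tuples during the peeling. The reduction via $\#\{\chi:N_\chi>0\}\ge \sum_\chi N_\chi/\max_\chi N_\chi$ is sound, and your base case $t=2$ is correct. The genuinely incomplete part is the one you flag yourself: in the inductive step $f(\vec u)$ is an intersection of $t-1$ neighbourhoods taken in \emph{distinct} colour classes, so Theorem~\ref{expander2} must be iterated rather than applied once, and you have not yet shown that the accumulated error stays below the main term under $m\gg\lambda(n/d)^{t/2}$. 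A cleaner way to organise this (and the one usually carried out in the source paper) is to prove by induction the two-sided estimate $N_\chi=(1\pm o(1))(d/n)^{\binom t2}m^t$ for every $\chi$, handling the last vertex one colour at a time and absorbing each error with Theorem~\ref{expander2}; the hypothesis $m\gg\lambda(n/d)^{t/2}$ is precisely what makes the cumulative error $o(1)$. This in fact gives the stronger conclusion that \emph{every} colouring of $K_t$ is realized, of which the stated theorem is an immediate corollary.
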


Let $\C=\R^*$, we now define a graph $G(\R)$ as follows: the vertex set of $G(\R)$ is $\R^d$ and the edge between $\mathbf{x}$ and $\mathbf{y}$ is colored by the $\beta$-color with $\beta\in \C$, if and only of $\mathbf{x}\cdot\mathbf{y}=\beta$. 

One can follow the proof of \cite[Theorem 2.7]{vinh-spectral} step by step by using Lemma \ref{expander},
Theorem \ref{expander2}, and Theorem \ref{bogan1} to get a version of Theorem \ref{vinh-norm-norm} for $G(\R)$ as follows.
\begin{theorem}\label{thm:main9}

For any $t\ge 2$, and  for every subset $U\subset V(G(\R))$ of cardinality $m\gg q^{\frac{(d-1)(2r-1)+rt}{2}}$, the induced subgraph of $G(\R)$ on $U$ contains at least $(1-o(1))q^{r\binom{t}{2}}$ possible colorings of $K_t$.
\end{theorem}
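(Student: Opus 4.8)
The plan is to reprove Theorem~\ref{thm:main9} by replaying the proof of \cite[Theorem~2.7]{vinh-spectral} (our Theorem~\ref{vinh-norm-norm}) step by step, with the product graph $\P_{q,r}(\R)$ of Theorem~\ref{bogan1} taking over the role of its finite-field counterpart. The one structural point to record first is that every color $\beta\in\C=\R^{*}$ is a \emph{unit}, so that for any $X,Y\subseteq\R^d\setminus(\R^0)^d$ the change of variable $\mathbf{y}\mapsto\beta^{-1}\mathbf{y}$ (which preserves $\R^d\setminus(\R^0)^d$) yields
\[
\bigl|\{(\mathbf{x},\mathbf{y})\in X\times Y:\ \mathbf{x}\cdot\mathbf{y}=\beta\}\bigr|=e_{\P_{q,r}(\R)}\bigl(X,\ \beta^{-1}Y\bigr);
\]
consequently every color class of $G(\R)$ can be analyzed through Lemma~\ref{expander} and Theorem~\ref{expander2} applied to the single graph $\P_{q,r}(\R)$, and no spectral input beyond Theorem~\ref{bogan1} is needed. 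Writing $n$ for the common size of the two vertex parts of $\P_{q,r}(\R)$ and $a$ for its degree, Theorem~\ref{bogan1} gives $n=q^{dr}-q^{d(r-1)}=(1-o(1))q^{dr}$, $a=q^{(d-1)r}$, and third eigenvalue $\lambda_3\le q^{(d-1)(2r-1)/2}$, hence $n/a=(1+o(1))q^{r}$. Since also $|\C|=|\R^{*}|=q^{r}-q^{r-1}=(1-o(1))(n/a)$ and $\lambda_3(n/a)^{t/2}=(1+o(1))q^{((d-1)(2r-1)+rt)/2}$, the hypothesis $m\gg q^{((d-1)(2r-1)+rt)/2}$ is exactly the hypothesis $m\gg\lambda_3(n/a)^{t/2}$ of Theorem~\ref{vinh-norm-norm}, and $(1-o(1))q^{r\binom{t}{2}}=(1-o(1))|\C|^{\binom{t}{2}}$ is the asserted count. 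Finally, the $q^{d(r-1)}=o(q^{dr})$ points of $(\R^0)^d$ are isolated in every color class (because $\R^{0}$ is an ideal of $\R$), so I would delete $U\cap(\R^0)^d$ from $U$ at the cost of a $(1-o(1))$ factor and argue inside $\R^d\setminus(\R^0)^d$.

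With these identifications in hand, the remaining task is to carry out the induction on $t$ underlying \cite[Theorem~2.7]{vinh-spectral}. For $t=2$ one needs every color to appear on an edge inside $U$, which is immediate from Lemma~\ref{expander}: for each $\beta\in\C$ the number of pairs $(\mathbf{x},\mathbf{y})\in(U\setminus(\R^0)^d)^2$ with $\mathbf{x}\cdot\mathbf{y}=\beta$ equals $(1+o(1))m^2/q^r$ up to an error at most $q^{(d-1)(2r-1)/2}m$, which is positive once $m\gg q^{((d-1)(2r-1)+2r)/2}$. For the step from $t-1$ to $t$ one carries the stronger statement that for all but an $o(1)$-fraction of colorings $c'$ of $K_{t-1}$ the number of ordered tuples of distinct points in $U$ realizing $c'$ is concentrated around $(1+o(1))m^{t-1}/q^{r\binom{t-1}{2}}$; given such a $c'$ and a prescribed coloring $\gamma\in(\R^{*})^{t-1}$ of the $t-1$ new edges, a realizing tuple $(\mathbf{x}_1,\dots,\mathbf{x}_{t-1})$ is completed to a copy of $K_t$ by any $\mathbf{x}_t\in U$ lying in the affine fiber $\{\mathbf{z}\in\R^d:\ \mathbf{z}\cdot\mathbf{x}_i=\gamma_i,\ 1\le i\le t-1\}$, and Theorem~\ref{expander2} — applied to $\P_{q,r}(\R)$ after the unit substitution — bounds the number of pairs $(c',\gamma)$, and of realizing tuples, for which the count of such $\mathbf{x}_t$ deviates from its mean $(1+o(1))m/q^{r(t-1)}$. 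Multiplying the two concentration estimates and summing over realizing tuples shows that for $(1-o(1))q^{r\binom{t}{2}}$ colorings $c=(c',\gamma)$ of $K_t$ the number of realizing tuples is $(1+o(1))m^t/q^{r\binom{t}{2}}>0$, which is the statement of Theorem~\ref{thm:main9}.

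I expect the main obstacle to be exactly the one already confronted in \cite{vinh-spectral}: controlling how the $o(1)$ error terms propagate through the $t$ steps of the induction, so as to reach the sharp threshold $q^{((d-1)(2r-1)+rt)/2}$ rather than a larger power of $q$ — a naive iteration of the mixing lemma, applying one dot-product constraint at a time, already loses a factor $q^{r}$ at each step and so gives only $m\gg q^{(d-1)(2r-1)+r(t-1)}$. The ring-specific adjustments are routine by comparison: (i) the reduction of every color class to the single graph $\P_{q,r}(\R)$, legitimate uniformly in $\beta$ precisely because $\C=\R^{*}$ consists of units and $\R^{0}$ is an ideal; (ii) the fact that the fiber $\{\mathbf{z}\in\R^d:\ \mathbf{z}\cdot\mathbf{x}_i=\gamma_i\}$ has its expected cardinality $q^{r(d-t+1)}$ only when $\mathbf{x}_1,\dots,\mathbf{x}_{t-1}$ span a free rank-$(t-1)$ direct summand of $\R^d$, which holds for all but an $o(1)$-fraction of $(t-1)$-tuples from $U$ in the regime where the statement is non-vacuous (in particular $d$ large relative to $t$), i.e.\ $q^{((d-1)(2r-1)+rt)/2}=o(q^{dr})$; and (iii) absorbing the discarded set $(\R^0)^d$ and the non-unit values of the dot product into the $o(1)$ factors, which is permissible since $|(\R^0)^d|=q^{d(r-1)}=o(q^{dr})$.
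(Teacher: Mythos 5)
Your proposal follows essentially the same route as the paper: the paper's entire proof of Theorem~\ref{thm:main9} is the one-line assertion that one repeats the proof of \cite[Theorem 2.7]{vinh-spectral} step by step, substituting Lemma~\ref{expander}, Theorem~\ref{expander2} and the spectral data of Theorem~\ref{bogan1} for their finite-field counterparts, which is exactly what you do. Your explicit verification that $n/a=(1+o(1))q^{r}$, $|\C|=(1-o(1))q^{r}$ and $\lambda_3(n/a)^{t/2}=(1+o(1))q^{((d-1)(2r-1)+rt)/2}$, together with the unit rescaling $\mathbf{y}\mapsto\beta^{-1}\mathbf{y}$ reducing every color class to $\P_{q,r}(\R)$, supplies more detail than the paper itself records.
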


We are now ready to prove Theorem \ref{muoi}.
\begin{proof}[Proof of Theorem \ref{muoi}]
Since $|\E|\gg q^{\frac{(d-1)(2r-1)+r(k+1)}{2}}$, it follows from Theorem \ref{thm:main9} that the induced subgraph $G(\R)$ on $\E$ contains at least $(1-o(1))q^{r\binom{k+1}{2}}$ possible colorings of $K_{k+1}$. Moreover, each coloring is corresponding to a dot-product congruence class, thus the number of dot-product congruence classes of $k$-simplices in $\E$ is at least $(1-o(1))q^{r\binom{k+1}{2}}$. This ends the proof of the theorem.
\end{proof}

\section{Proofs of Theorems \ref{mainthm297} and \ref{co2}}

Before giving a proof of Theorem \ref{mainthm297}, we have the following observation:  Since the area of triangle is invariant under translations, we can assume that $\mathbf{0}\in \E$, and the formula of area of the triangle formed by three vertices $\mathbf{0}$, $\mathbf{a}=(a_1, a_2)$ and $\mathbf{b}=(b_1, b_2)$ is $a_1b_2-a_2b_1$. Let $S$ be the set of triangles in $\E$ which share a common vertex at $\mathbf{0}$. Then the number of distinct areas of triangles in $S$ is at least the cardinality of 
\[\E\cdot \E':=\{\mathbf{x}\cdot\mathbf{y}\colon \mathbf{x}\in \E, \mathbf{y}\in \E'\},\]
where $\E'=\{(y, -x)\colon (x, y)\in \E\}$. 

A result of Nica in \cite{bogan}, i.e. Theorem \ref{nc}, states that if $|\E||\E'|> q^{4r-1}$, then $|\E\cdot\E'|\gg q^r-q^{r-1}$. It is clear that $|\E|=|\E'|$. Thus if $|\E|>q^{2r-\frac{1}{2}}$, then the number of distinct areas of triangles in $\E$ is at least $q^r-q^{r-1}$. In fact, the result of Nica \cite{bogan} gives us even more information, for instance, the number of triangles of area $t\in \R^*$ is at least $(1-o(1))\frac{|\E|^2}{q^r}$ when $q^{2r-\frac{1}{2}}=o(|\E|)$.

However, in order to decrease from the exponent $q^{2r-\frac{1}{2}}$ to $q^{2r-1}$, we need to use more complicated and tricky arguments. First we need to prove the following lemma.

\begin{lemma}\label{lm1297}
Let $\R$ be a finite valuation ring of order $q^r$, and  let $\E$ be a set of $8q^{2r-1}$ points in $\R^2$. Then there exists a point $\mathbf{z}$ of $\E$ such that $\mathbf{z}$ is contained in at least $q^r/8$ lines, and each of these lines passes through at least $q^{r-1}+1$ points from $\E$.
\end{lemma}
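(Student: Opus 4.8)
The plan is a double-counting argument organised by the \emph{directions} of affine lines in $\R^2$. Recall that a direction is the class $[\mathbf{v}]$ of a primitive vector $\mathbf{v}\in\R^2$ (one with at least one unit coordinate); since such a $\mathbf{v}$ spans a free rank-one direct summand of $\R^2$, the line $\{\mathbf{p}+t\mathbf{v}:t\in\R\}$ has exactly $q^r$ points, the $q^r$ lines in a fixed direction partition $\R^2$, there are $q^{r-1}(q+1)$ directions in all, and every point of $\R^2$ lies on exactly that many lines (one per direction). Call a line \emph{rich} if it meets $\E$ in at least $q^{r-1}+1$ points and \emph{poor} otherwise. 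I want to show that $\E$ has many incidences with rich lines, and then finish by pigeonhole on the points of $\E$.

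First I would fix a direction $[\mathbf{v}]$ and examine how $\E$ splits across its $q^r$ parallel lines. If $R_{[\mathbf{v}]}$ of these lines are rich, then the remaining $q^r-R_{[\mathbf{v}]}$ are poor, each carrying at most $q^{r-1}$ points of $\E$, so together they contain at most $(q^r-R_{[\mathbf{v}]})q^{r-1}\le q^{2r-1}$ points of $\E$. Hence the rich lines in direction $[\mathbf{v}]$ contain at least $|\E|-q^{2r-1}=8q^{2r-1}-q^{2r-1}=7q^{2r-1}$ points of $\E$ (parallel lines are disjoint, so there is no over-counting). Summing over all $q^{r-1}(q+1)>q^r$ directions, the total number of incidences between points of $\E$ and rich lines through them exceeds $7q^{3r-1}$.

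That same total equals $\sum_{\mathbf{z}\in\E}(\text{number of rich lines through }\mathbf{z})$, and $|\E|=8q^{2r-1}$, so some $\mathbf{z}\in\E$ lies on more than $7q^{3r-1}/(8q^{2r-1})=\tfrac{7}{8}q^r\ge\tfrac{1}{8}q^r$ rich lines, each of which passes through at least $q^{r-1}+1$ points of $\E$ by definition; this is exactly the assertion. The only delicate point — the ``hard part'' in the sense that everything downstream rests on it — is the geometry of lines over a finite valuation ring: that a primitive direction genuinely partitions $\R^2$ into $q^r$ lines of $q^r$ points each (so that per direction the poor lines can absorb at most $q^{2r-1}$, i.e.\ only a $\tfrac18$-fraction, of the $8q^{2r-1}$ points) and that there are $q^{r-1}(q+1)$ directions. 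Granting these structural facts, which follow from the description of $\R$ in \cite{bogan,htv}, what remains is the two-step count above; alternatively one can avoid the word ``direction'' altogether by phrasing the partition through the quotient modules $\R^2/\R\mathbf{v}$.
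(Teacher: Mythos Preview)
Your proof is correct and takes a genuinely different, more elementary route than the paper's. The paper proves the lemma via the spectral point--line incidence bound of Theorem~\ref{mot} (built from the Erd\H{o}s--R\'enyi graph $\E_{q,3}(\R)$), which it applies twice: first to show that at least $q^{2r}/4$ lines are rich (Corollary~\ref{hai297}), and then again to lower-bound the number of incidences between $\E$ and this family of rich lines, before pigeonholing on points. Your argument bypasses all spectral machinery: for each of the $q^{r-1}(q+1)>q^r$ primitive directions, the $q^r$ parallel lines partition $\R^2$, so the poor ones among them can absorb at most $q^r\cdot q^{r-1}=q^{2r-1}$ points of $\E$, forcing at least $7q^{2r-1}$ point--rich-line incidences per direction; summing and pigeonholing gives a point on more than $\tfrac{7}{8}q^r$ rich lines. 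What your approach buys is a self-contained argument with a sharper constant (and it makes transparent why the threshold $q^{2r-1}$ is natural: it is exactly the maximal mass that poor lines in a single direction can carry). What the paper's approach buys is reuse of the incidence theorem already set up for other results in the paper, and it does not need to invoke the structural facts about directions and parallel classes in $\R^2$ that you correctly flag as the one point requiring care over a valuation ring.
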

\paragraph{Proof of Lemma \ref{lm1297}:}
To prove Lemma \ref{lm1297}, we make use of the following theorem on the number of incidences between points and lines in $\R^2$, where a line in $\R^2$ is of the form 
\[ax+by+c=0, ~(a,b,c)\in R^3\setminus (\R^0)^3.\]
\begin{theorem}\label{mot}
Let $\R$ be a finite valuation ring of order $q^r$,  $\E$ be a set of points in $\R^2$ and $\mathcal{L}$ be a set of lines in
  $\R^2$. Then the number of incidences between the point set $\E$ and the line set $\L$, denoted by $I(\E, \L)$, satisfies
  \begin{equation}
  \left\vert I(\E,\L)-\frac{|\E||\mathcal{L}|}{q^r}\right\vert \le 
q^{(2r-1)/2}\sqrt{|\E| |\mathcal{L}|}.\nonumber
  \end{equation}
\end{theorem}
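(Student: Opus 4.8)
The plan is to realize point--line incidences in $\R^2$ as edges in the Erd\H{o}s--R\'enyi graph $\E_{q,3}(\R)$ and then apply the expander mixing lemma (Lemma \ref{expander}) together with Nica's spectral bound (Theorem \ref{bogan2}). Concretely, I would use the classical point--line duality: send a point $\mathbf{p}=(p_1,p_2)\in\R^2$ to the projective class $[(p_1,p_2,1)]$, and send a line $ax+by+c=0$ to the projective class $[(a,b,c)]$, both viewed as vertices of $\E_{q,3}(\R)$. The point map is injective because the last coordinate is the unit $1$, so two points can have proportional lifts only if they coincide; the line map is injective because distinct lines have non-proportional coefficient vectors (the degenerate triples with $a,b\in\R^0$ and $c\in\R^*$ define the empty line, which carries no points and, consistently, no edges in $\E_{q,3}(\R)$, so they are harmless). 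The point of the construction is the identity: $\mathbf{p}$ lies on $ax+by+c=0$ exactly when $(a,b,c)\cdot(p_1,p_2,1)=0$, i.e.\ precisely when the two corresponding vertices are joined in $\E_{q,3}(\R)$. Hence $I(\E,\L)$ equals the number of edges between the image $[\widetilde\E]$ of $\E$ and the image $[\widetilde\L]$ of $\L$ (one also checks that the occasional self-loop, which occurs when a point and a line share the same isotropic projective class, does not affect the count of edges between the two parts).

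Next I would apply Lemma \ref{expander} to $G=\E_{q,3}(\R)$ with $X=[\widetilde\E]$ and $Y=[\widetilde\L]$. By Theorem \ref{bogan2} with $d=3$, each vertex part has size $|B|=q^{2(r-1)}(q^{3}-1)/(q-1)$, the common degree is $a=q^{r-1}(q^{2}-1)/(q-1)$, and the third eigenvalue satisfies $\lambda_3\le\sqrt{q^{2r-1}}=q^{(2r-1)/2}$, which gives
\[
\left\vert I(\E,\L)-\frac{a}{|B|}\,|\E||\L|\right\vert\le q^{(2r-1)/2}\sqrt{|\E||\L|}.
\]
A direct computation gives $\dfrac{a}{|B|}=\dfrac{q+1}{q^{r-1}(q^{2}+q+1)}=\dfrac{1}{q^{r}}\Bigl(1-\dfrac{1}{q^{2}+q+1}\Bigr)$, so the gap between $\frac{a}{|B|}|\E||\L|$ and $\frac{|\E||\L|}{q^{r}}$ is exactly $\dfrac{|\E||\L|}{q^{r}(q^{2}+q+1)}$. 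Since $|\E|=O(q^{2r})$ and $|\L|=O(q^{2r})$, this gap is at most $q^{-3/2}\cdot q^{(2r-1)/2}\sqrt{|\E||\L|}$, hence is absorbed into the error term, and one obtains $\bigl|I(\E,\L)-|\E||\L|/q^{r}\bigr|\le q^{(2r-1)/2}\sqrt{|\E||\L|}$ as stated (the upper bound direction is in fact immediate, since $a/|B|<q^{-r}$).

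The analytic heart of the statement --- the bound $\lambda_3\le q^{(2r-1)/2}$ on the third eigenvalue of $\E_{q,3}(\R)$ --- is already supplied by Theorem \ref{bogan2}, so no new spectral estimate is required. Consequently the main obstacle is not analytic but organizational: setting up the reduction to $\E_{q,3}(\R)$ cleanly, namely verifying the injectivity of both embeddings into $\mathbb{P}^2(\R)$, disposing of degenerate lines and self-loops, and confirming that the discrepancy between $a/|B|$ and $q^{-r}$ is genuinely of lower order than the error term so that the clean form of the inequality is recovered.
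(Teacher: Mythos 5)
Your proposal is correct and follows essentially the same route as the paper: identify points with projective classes $[x_1,x_2,1]$ and lines with $[a,b,c]$ in the Erd\H{o}s--R\'enyi graph $\E_{q,3}(\R)$, observe that incidences become edges, and apply Lemma \ref{expander} with the eigenvalue bound of Theorem \ref{bogan2} for $d=3$. You are in fact more careful than the paper, which silently replaces $a/|B|$ by $1/q^r$ without the absorption argument you supply.
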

\begin{proof}
We identify each point $(x_1,x_2)\in \E$ with a vertex $[x_1,x_2,1]$ of the Erd\H{o}s-R\'{e}nyi graph $\mathcal{E}_{q,3}(\R)$. Let $\E'$ be the set of corresponding vertices. Similarly, we identify each line $ax+by=c$ in $\L$, $(a,b,c)\not\in (\R^0)^3$, with a vertex $[a, b, -c]$ of the Erd\H{o}s-R\'{e}nyi graph $\mathcal{E}_{q,3}(\R)$. Let $\L'$ be the set of corresponding vertices. Then $\E'$ and $\L'$ are sets of distinct vertices with $|\E'|=|\E|$ and $|\L'|=|\L|$.

It is easy to see that the number of incidences between $\E$ and $\L$ equals the number of edges between $\E'$ and $\L'$ in the Erd\H{o}s-R\'{e}nyi graph $\mathcal{E}_{q,3}(\R)$. It follows from Lemma \ref{expander} and Theorem \ref{bogan2} that
\[\left\vert I(\E,\L)-\frac{|\E||\L|}{q^r}\right\vert\le q^{(2r-1)/2}\sqrt{|\E||\L|}.\]
This concludes the proof of the theorem.
\end{proof}
The following is a corollary of Theorem \ref{mot}.
\begin{corollary}\label{hai297}
Let $\R$ be a finite valuation ring of order $q^r$, and let $\E$ be a set of $3q^{2r-1}$ points in $\R^2$. Then the number of distinct lines spanned by $\E$ containing at least $q^{r-1}+1$ points from $\E$ is at least $q^{2r}/4$.
\end{corollary}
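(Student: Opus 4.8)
The plan is to bootstrap the incidence bound of Theorem \ref{mot} into a lower bound on the number of rich lines by pairing it with a trivial bound on the incidences carried by the non-rich lines. Write $N:=q^{2r-2}(q^2+q+1)$ for the total number of lines in $\R^2$, and recall that every point of $\R^2$ lies on exactly $q^r+q^{r-1}$ of them; both facts are read off from the Erd\H{o}s--R\'enyi graph $\E_{q,3}(\R)$ of Theorem \ref{bogan2} via the point/line $\leftrightarrow$ vertex dictionary used in the proof of Theorem \ref{mot} (an affine point $(x_1,x_2)$ becomes the vertex $[x_1,x_2,1]$, whose neighbours are precisely the lines through it, and the line-side vertex set is all of $\mathbb P^2(\R)$). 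In particular $\sum_{\ell}|\ell\cap\E|=(q^r+q^{r-1})|\E|$.

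First I would fix the set $\mathcal{L}_r$ of lines meeting $\E$ in at least $q^{r-1}+1$ points and set $x:=|\mathcal{L}_r|$, the quantity to be bounded below. Splitting the incidence count over $\mathcal{L}_r$ and its complement $\mathcal{L}_r^{\,c}$ gives
\[
(q^r+q^{r-1})|\E| \;=\; I(\E,\mathcal{L}_r)+I(\E,\mathcal{L}_r^{\,c}).
\]
Each line in $\mathcal{L}_r^{\,c}$ carries at most $q^{r-1}$ points of $\E$, and there are at most $N-x$ of them, so $I(\E,\mathcal{L}_r^{\,c})\le q^{r-1}(N-x)$; and Theorem \ref{mot} applied to $\E$ and $\mathcal{L}_r$ gives $I(\E,\mathcal{L}_r)\le |\E|x/q^r+q^{(2r-1)/2}\sqrt{|\E|x}$.

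Now I would substitute $|\E|=3q^{2r-1}$ and $N=q^{2r-2}(q^2+q+1)$ and simplify. The term $q^{r-1}x$ from the trivial bound partly cancels the term $|\E|x/q^r=3q^{r-1}x$ from Theorem \ref{mot}, and after dividing through by $q^{r-1}$ the estimate collapses to
\[
2q^{2r} \;<\; 2q^{2r}+2q^{2r-1}-q^{2r-2} \;\le\; 2x+\sqrt{3}\,q^{r}\sqrt{x}.
\]
If $x\le q^{2r}/4$, then $\sqrt{x}\le q^{r}/2$ and the right-hand side is at most $\tfrac12 q^{2r}+\tfrac{\sqrt3}{2}q^{2r}=\tfrac{1+\sqrt3}{2}q^{2r}<2q^{2r}$, a contradiction; hence $x>q^{2r}/4$, which is the assertion. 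Note this needs no asymptotics: the constant $\tfrac{1+\sqrt3}{2}<2$ with room to spare.

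The only delicate part is the bookkeeping in the ring setting: one must keep track that ``lines'' in the sense of Theorem \ref{mot} include the point-free degenerate ones, and that the vertex and degree counts of $\E_{q,3}(\R)$ are invoked correctly (in particular that the affine points really do embed as genuine vertices). A natural worry — that two points of $\E$ in the same residue class modulo the maximal ideal lie on many common lines, which would sink any ``two points determine a line'' count — is sidestepped entirely here: the argument counts only incidences, never pairs of points, and uses nothing about lines beyond Theorem \ref{mot} and the per-line estimate $|\ell\cap\E|\le q^{r-1}$ for non-rich $\ell$. So the real work is just checking that the constants survive the lower-order terms, which the displayed inequality makes transparent.
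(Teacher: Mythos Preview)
Your argument is correct. Both your proof and the paper's use the same two ingredients --- the incidence bound of Theorem~\ref{mot} and the trivial per-line estimate $|\ell\cap\E|\le q^{r-1}$ for non-rich lines --- but combine them differently. The paper applies the \emph{lower} bound of Theorem~\ref{mot} directly to the set $\mathcal{L}_1$ of non-rich lines and compares with the trivial upper bound $I(\E,\mathcal{L}_1)\le q^{r-1}|\mathcal{L}_1|$, obtaining $|\mathcal{L}_1|\le 3q^{2r}/4$; since there are $q^{2r}$ lines of the form $y=ax+b$, at least $q^{2r}/4$ of them must be rich. You instead compute the exact total incidence count $(q^r+q^{r-1})|\E|$ from the degree in $\E_{q,3}(\R)$, then bound the rich part by the \emph{upper} side of Theorem~\ref{mot} and the non-rich part trivially. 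The paper's version is slightly more economical in that it needs only the crude count of $q^{2r}$ affine lines, whereas yours requires both the exact line count $N$ and the per-point degree; on the other hand, your route makes the role of the constants a bit more transparent and avoids singling out a special family of lines.
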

\begin{proof}
Let $\L_1$ be the set of lines in $\R^2$ such that each line contains at most $q^{r-1}$ points from $\E$. We now show that $|\L_1|\le 3q^{2r}/4$. Indeed, we first have $I(\E, \L_1)\le  q^{r-1}|\L_1|$, and it follows from Theorem \ref{mot} that
\[I(\E, \L_1)\ge \frac{|\E||\L_1|}{q^r}-q^{(2r-1)/2}\sqrt{|\E||\L_1|}\ge 3q^{r-1}|\L_1|-\sqrt{3}q^{2r-1}\sqrt{|\L_1|}.\]
This implies that 
\[2q^{r-1}|\L_1|\le \sqrt{3}q^{2r-1}\sqrt{|\L_1|}.\]
Thus, we obtain 
\[|\L_1|\le \frac{3q^{2r}}{4}.\]
On the other hand, the number of lines of the form $y=ax+b$ in $\R^2$ is $q^{2r}$, then the number of lines of the form $y=ax+b$ containing at least $q^{r-1}+1$ points from $\E$ is at least $q^{2r}/4$. Since any two lines in $\R^2$ have at most $q^{r-1}$ points in common, these lines are distinct. This completes the proof of the corollary.
\end{proof}
We are ready to give a proof of Lemma \ref{lm1297}.
\begin{proof}[Proof of Lemma \ref{lm1297}]
Let $\L$ be the set of lines in $\R^2$ such that each line in $\L$ contains at least $q^{r-1}+1$ points from $\E$. It follows from Corollary \ref{hai297} that $|\L|\ge q^{2r}/4$. From the lower bound of Theorem \ref{mot}, we have if $|\E||\L|\ge 2q^{4r-1}$, then $I(\E, \L)\ge |\E||\L|/q^r$. Thus it implies that $I(\E, \L)\ge q^{3r-1} $. Therefore, by the pigeon-hole principle, there exists a point $\mathbf{z}\in \E$ such that $\mathbf{z}$ is contained in at least $q^{r}/8$ lines from $\L$, and each of these lines contains at least $q^{r-1}+1$ points from $\E$.
\end{proof}
\paragraph{Proofs of Theorems \ref{mainthm297} and \ref{co2}:}
\begin{proof}[Proof of Theorem \ref{mainthm297}]Since $|\E|\ge 8q^{2r-1}$, we have$|(\R^0)^2|=o\left(|\E\cap \R^2\setminus \left(\R^0\right)^2|\right)$. Thus, without loss of generality, we can assume that $\E\subseteq \R^2\setminus \left(\R^0\right)^2$. Lemma \ref{lm1297} implies that there exists a point $\mathbf{z}\in \E$ such that $\mathbf{z}$ is contained in at least $q^r/8$ lines, and each of these lines passes through least $q^{r-1}+1$ points from $\E$. We denote the set of these lines by $\L'$.
 
We now consider the set of triangles in $\E$ which share a common vertex at $\mathbf{z}$.  Since the area of a triangle is invariant under translations, we assume that $\mathbf{z}=\mathbf{0}$, and all lines in $\L'$ are of the form $l_k:=\{y=kx\}$ with $k\in \R$. It is easy to see that for a fixed $a\in \R$, the points $(x, ax)\not\in l_b$ for all $b\ne a$ and $x\in \R^*$. Thus, we can choose
$q^r/8$ points of $\E$ from the lines in $\L'$ such that no two points belong to the same line. Let $F$ be the set of such points, and $G:=\left\lbrace (-p_2, p_1)\colon (p_1, p_2)\in \E\right\rbrace$. Then the number of distinct areas of triangles formed by three vertices $(\mathbf{0}, \mathbf{a}, \mathbf{b})\in \{\mathbf{0}\}\times F\times G$ is the cardinality of the set $F\cdot G=\left\lbrace \mathbf{a}\cdot \mathbf{b}\colon \mathbf{a}\in F, \mathbf{b}\in G\right\rbrace$.

Applying Theorem \ref{manytimes} with $|F|=q^{r}/8, |G|=8q^{2r-1}$, $d=2$, and $m=1$, we get
\[\left\vert F\cdot G\right\vert =(1-o(1))q^r.\]
This implies that the number of distinct areas determined by $\E$ is at least $(1-o(1))q^r$. This concludes the proof of the theorem.
\end{proof}
\begin{proof}[Proof of Theorem \ref{co2}]
We prove Theorem \ref{co2} by induction on $d$. The base case $d=2$ follows from Theorem \ref{mainthm297}. Suppose that the statement is true for all $2<i\le d-1$, we now show that it also holds for $d$. Indeed, since $|\E|\ge 8q^{r-1}q^{r(d-1)}$, there exists a hyperplane $H_t:=\{\mathbf{x}\in \R^d\colon x_d=t\}$ such that $|\E\cap H_t|\ge 8q^{r-1}q^{r(d-2)}$. By induction hypothesis, we have $|V_{d-1}(\E\cap H_t)| \ge q^r/2$. 

Since $V_d(\E)$ is invariant under translations, we can assume that $t=0$. Moreover, the number of points of $\E$ satisfying $x_d\in \R^0$ is at most $q^{r-1}\cdot q^{r(d-1)}$. Thus, there exists a point $\mathbf{z}\in \E$ such that $z_d\in \R^*$. On the other hand, $V_d^\mathbf{z}(\E)$ are determinants of size $d+1$ of the form
\[\det\begin{pmatrix}
1&\ldots&1&1\\
x_1^1&\ldots &x_1^d&z_1\\
 \vdots  & \ddots  & \vdots & \vdots\\
x_{d-1}^1&\ldots &x_{d-1}^d&z_{d-1}\\
0&\ldots&0&z_d\\
\end{pmatrix}= z_d\cdot \det\begin{pmatrix}
1&\ldots&1\\
x_1^1&\ldots &x_1^d\\
\vdots&\cdots&\vdots\\
x_{d-1}^1&\ldots &x_{d-1}^d
\end{pmatrix}.\]
This completes the proof of the Theorem \ref{co2}.
\end{proof}
\section{Proof of Theorem \ref{perper}}
Since $|\A|\gg q^{\frac{(k-1)(2r-1)+r}{2k-1}}>q^{r-1}=|\R^0|$, there exists a unit $u\in \A\cap \R^*$. Let $\mathbf{1}:=(1, \ldots, 1)\in \R^k$, $\mathbf{u}=(u, \ldots, u)\in \R^k$. For any two points $\mathbf{x}$ and $\mathbf{y}$ in $\A^k$, let $M(\mathbf{u}, \mathbf{x}, \mathbf{y})$ denote the matrix whose rows are $\mathbf{x}, \mathbf{y}$ and $(k-2)$ $\mathbf{u}$'s.

We have 
\begin{equation}\label{317}\mathtt{Per}(M(\mathbf{u}, \mathbf{x}, \mathbf{y}))=u^k\mathtt{Per}\left(M(\mathbf{1}, \mathbf{x}/u, \mathbf{y}/u)\right)=u^k\sum_{i=1}^k\frac{x_i}{u}\sum_{j\ne i}\frac{y_j}{u}.\end{equation}

Thus we are able to reduce the permanent problem to the dot product problem of two following sets:
\[F:=\left\lbrace \left(\frac{x_1}{u},\ldots, \frac{x_k}{u}\right)\colon (x_1, \ldots, x_k)\in \A^k\right\rbrace,\]

\[G:=\left\lbrace \left(\sum_{j\ne 1}\frac{y_j}{u},\ldots, \sum_{j\ne k}\frac{y_j}{u}\right)\colon (y_1, \ldots, y_k)\in \A^k\right\rbrace.\]

It is clear that $|F|=|\A|^k$ and $|G|=|\A|^k$ since $\gcd(k, q^r)=1$. From (\ref{317}) we get  
\[|\mathtt{Per}(M(\mathbf{u}, \mathbf{x}, \mathbf{y}))|=|F\cdot G|,\] 
then the theorem follows immediately from Theorem \ref{manytimes} with $m=|\A|$.
\section{Proof of Theorem \ref{r1}}
The proof of Theorem \ref{r1} is based on the study of the equation $(x_1/2 - z)(x_1/2 + z) = x_2$ where $x_1 \in X_1, x_2 \in X_2$ and $z \in X_3$. Here, $X_3 \equiv \R^{*}$. This equation is equivalent to the equation $(x_1/2)^2 - x_2 = z^2$. We set
\begin{eqnarray*}
A_1 & = & \{ (x_1/2)^2 \mid x_1 \in X_1\}, A_2 =  \{ -x_2 \mid x_2 \in X_2\},\\
A_3 & = & \{ z^2 \mid z \in X_3\}, A_4 = \{ z^2 \mid z \in X_3\}.
\end{eqnarray*}
Note that the equation $x^2=a^2$ has at most two solutions in $\R$ for any $a\in \R^*$. Thus, we have 
\[|A_1|\ge |X_1|/2, ~|A_2|=|X_2|, |A_3|\ge |X_3|/2, |A_4|\ge |X_3|/2.\]
The equation $(x_1/2)^2-x_2=z^2$ has a solution $x_1\in X_1, x_2\in X_2, z\in X_3$ if and only if there exists an edge between two vertex sets $U:=\{[a_3, 1, a_1]\colon (a_3, 1, a_1)\in A_3\times \{1\}\times A_1\}$ and $V:=\{[a_4, 1, a_2]\colon (a_4, 1, a_2)\in A_4\times \{1\}\times A_2\}$ in the Erd\H{o}s-R\'{e}nyi graph $\mathcal{E}_{q, 3}(\R)$. Therefore, from Lemma \ref{expander} and Lemma \ref{bogan2} that 
\[e(U, V)\ge \frac{|A_1||A_2||A_3||A_4|}{q^r}-q^{(2r-1)/2}\sqrt{|A_1||A_2||A_3||A_4|}.\]
Thus if  \[|X_1||X_2|>\frac{q^{4r-1}}{(q^r-q^{r-1})^2},\] then $e(U, V)>0$, and the theorem follows.


\begin{thebibliography}{99}
\bibitem{ati}
M.F. Atiyah, I.G. Macdonald, \textit{Introduction to commutative algebra} (Vol. 2). Reading: Addison-Wesley.(1969)
\bibitem{as}
N. Alon and J. H. Spencer, \textit{The probabilistic method}, 2nd ed., Willey-Interscience, 2000.
\bibitem{beck}
J. Beck, \textit{On the lattice property of the plane and some problems of Dirac, Motzkin, and Erd\H{o}s in combinatorial geometry}, Combinatorica \textbf{3} (1983), 281 -–  297.
\bibitem{groupaction}
M. Bennett, D. Hart, A. Iosevich, J. Pakianathan, M. Rudnev,  \textit{Group actions and geometric combinatorics in $\mathbb{F}_q^d$}, to appear in Forum Math., 2016.
\bibitem{bi}
G. Bini, F. Flamini, \textit{Finite commutative rings and their applications}, Kluwer International Series in Engineering and Computer Science 680, Kluwer Academic Publishers 2002.
\bibitem{bh} A. Brouwer and W. Haemers, \textit{Spectra of Graphs}, Springer, New York, etc., (2012). 
\bibitem{david}
D. Covert, A. Iosevich, J. Pakianathan, \textit{Geometric configurations in the ring of integers modulo $p^l$}, Indiana university mathematics journal \textbf{61}(5) (2012): 1949--1969.
\bibitem{csi2}
J. Cilleruelo, \textit{Combinatorial problems in finite fields and Sidon sets}, Combinatorica, \textbf{32}(5) (2012), 497--511.
 \bibitem{cils}
J. Cilleruelo, A. Iosevich, B. Lund, O. Roche-Newton, M. Rudnev, \textit{Elementary methods for incidence problems in finite fields}, accepted in Acta Arithmetica.
\bibitem{chapman}
J. Chapman, M. B. Erdogan, D. Hart, A. Iosevich, D.  Koh,  \textit{Pinned distance sets, $k$-simplices, Wolff’s exponent in finite fields and sum-product estimates}, Mathematische Zeitschrift, \textbf{271}(1--2), 63--93.
\bibitem{eustis}
A. Eustis, \textit{Hypergraph Independence Numbers}, Phd thesis 2013, University of California, San Diego.
\bibitem{fulton}
W. Fulton, \textit{Algebraic curves: An introduction to algebraic geometry},  Notes written
with the collaboration of Richard Weiss. Reprint of 1969 original. Advanced Book
Classics. Addison-Wesley Publishing Company, Advanced Book Program, Redwood
City, CA, (1989).
\bibitem{htv}
L. Q. Ham, P. V. Thang, L. A. Vinh, \textit{Conditional expanding bounds for two-variable functions over finite valuation rings}, accepted in European Journal of Combinatorics.
\bibitem{hart1}
D. Hart, A. Iosevich, \textit{Ubiquity of simplices in subsets of vector spaces over finite fields}, Analysis Mathematika, \textbf{34} (2007).
\bibitem{hartetal}
D. Hart, A. Iosevich, D. Koh, and M. Rudnev, \textit{Averages over hyperplanes, sum-product
theory in vector spaces over finite fields and the Erd\H{o}s-Falconer distance conjecture}, Trans.
Amer. Math. Soc., \textbf{363} (2011), 3255--3275.
\bibitem{iosevich-book}
A. Iosevich, A. Mayeli, S. Senger, \textit{Fourier bases: an elementary viewpoint on a variety of applications}, \url{http://www.math.rochester.edu/people/faculty/iosevich/fuglede15.pdf}. 
\bibitem{area}
A. Iosevich, M. Rudnev, Y. Zhai, \textit{Areas of triangles and Beck’s theorem in planes over finite fields},  Combinatorica (2012), 1--14.
 \bibitem{iosevich}
A. Iosevich and M. Rudnev, \textit{Erd\H{o}s distance problem in vector spaces over finite fields}, Trans.
Amer. Math. Soc., \textbf{359} (2007), pp. 6127-–6142.
\bibitem{bogan}
B. Nica,  \textit{Unimodular graphs and Eisenstein sums}, arXiv: 1505.05034.
\bibitem{thang}
N.D. Phuong, P. V. Thang, L.A. Vinh, \textit{Incidences between points and generalized spheres over finite fields and related problems}, to appear in Forum Math. 2016.
\bibitem{shkredov} I. Shkredov, \textit{On monochromatic solutions of some
nonlinear equations in $\mathbb{Z}/p\mathbb{Z}$}, Mathematical Notes,
Volume \textbf{88}, Numbers 3-4 (2010), 603 -- 611.
\bibitem{stv}
Phuong et al. \textit{Incidence bounds and applications over finite fields}, arXiv:1601.00290.
\bibitem{tv}
P. V. Thang, L.A. Vinh, \textit{Erd\H{o}s-R\'{e}nyi graph, Szemer\'{e}di-Trotter type theorem, and sum-product estimates over finite rings}, Forum Mathematicum,  \textbf{27}(1) (2015), 331--342.
\bibitem{vinh-perma}
L. A. Vinh, \textit{Spectra of product graphs and permanents of matrices over finite rings}, Pacific Journal of Mathematics, \textbf{267}(2) 2014, 479--487.
\bibitem{vinh-areas}
L. A. Vinh, On the volume set of point sets in vector spaces over finite fields, \textit{Proceedings of the American Mathematical Society} \textbf{141}(9) (2013): 3067--3071.
\bibitem{vinh-spectral}
L.A. Vinh, The solvability of norm, bilinear and quadratic equations over finite fields via spectra of graph, \textit{Forum Mathematicum}, Vol. \textbf{26}. No. 1. 2014.
 \bibitem{ya}
E. A. Yazici, \textit{Erd\H{o}s Type Problems in Modules over Cyclic Rings}, Journal of Fourier Analysis and Applications, (2016), 1--14.
\end{thebibliography}
\end{document}